\documentclass[reqno,11pt]{amsart}
\usepackage{amsthm}
%\setlength{\textwidth}{14.5cm}
%\addtolength{\oddsidemargin}{-0.75cm}
%\addtolength{\evensidemargin}{-0.75cm}
\usepackage{filecontents}\usepackage[margin=1in]{geometry}
\usepackage{amssymb,amsfonts}
\usepackage{mathtools}
\usepackage{nicematrix}
\usepackage{enumerate}
\usepackage{mathrsfs}
\usepackage{mathtools}
\usepackage{xcolor}
\usepackage{hyperref}
%\usepackage{tikz}
%\usepackage{tikz-cd}
%\usetikzlibrary{arrows,calc,shapes,decorations.pathreplacing}
%\usepackage[toc,page]{appendix}

% \usepackage{sagetex}
\usepackage{blkarray}
%--------Theorem Environments--------
%--------Theorem Environments--------
\usepackage{amsthm}
\theoremstyle{definition}
\newtheorem{thm}{Theorem}[section]
\theoremstyle{definition}

\newtheorem{cor}[thm]{Corollary}
\newtheorem{prop}[thm]{Proposition}
\newtheorem{lem}[thm]{Lemma}

\newtheorem{quest}[thm]{Question}
\newtheorem{defn}[thm]{Definition}

\newtheorem{rem}{Remark}[section] 
\def\A{{\mathbb A}}
\def\C{{\mathbb C}}
\def\Z{{\mathbb Z}}
\def\D{{\mathcal D}}

\def\Q{{\mathbb Q}}
\def\A{{\mathbb A}}
\def\R{{\mathbb R}}

\def\F{{\mathbb F}}

\def\C{{\mathbb C}}

\DeclareMathOperator{\an}{an}

\DeclareMathOperator{\End}{End}

\DeclareMathOperator{\GL}{GL}

\DeclareMathOperator{\sep}{sep}

\DeclareMathOperator{\Gal}{Gal}

\def\bbG{{\mathbb G}}

\def\R{{\mathbb R}}
\def\F{{\mathbb F}}
\def\F{{\mathbb F}}

\DeclareMathOperator{\gr}{gr}
\DeclareMathOperator{\sups}{ss}
\DeclareMathOperator{\insep}{insep}

\DeclareMathOperator{\univ}{univ}
\DeclareMathOperator{\Sh}{Sh}
\DeclareMathOperator{\Res}{Res}
\DeclareMathOperator{\GSpin}{GSpin}
\DeclareMathOperator{\GSp}{GSp}
\DeclareMathOperator{\rig}{rig}
\DeclareMathOperator{\SO}{SO}
\DeclareMathOperator{\KS}{KS}

\DeclareMathOperator{\im}{im}

\DeclareMathOperator{\Aut}{Aut}

\usepackage{tikz-cd}
\usepackage[all,cmtip]{xy}

%\DeclareMathOperator{\im}{im}
%\renewcommand{\Re}{\operatorname{Re}}

%\renewcommand{\Im}{\operatorname{Im}}
%\newcommand*{\isoarrow}[1]{\arrow[#1,"\rotatebox{90}{\(\sim\)}"
%]}
%\newcommand{\lie{p}}[1]{\ \mathfrak{p}\ #1}$A$
%\DeclarePairedDelimiterX\set[1]\lbrace\rbrace{\def\given{\;\delimsize\vert\;}#1}
%\makeatletter
%\let\c@equation\c@thm
%\makeatother
%\numberwithin{equation}{section}

%\hypersetup{ colorlinks=true,
%linkcolor=blue
%}

\hypersetup{ colorlinks=true,
linkcolor=blue
}

 % Colours hyperlinks in blue, but this can be distracting if there are many links.
\setcounter{tocdepth}{1}
%% ----------------------------------------------------------------https://www.overleaf.com/project/615ba3f0eed72633c768af67
\begin{document}
\title{Monodromy results for abelian surfaces and K3 surfaces with bad reduction}
\author{Tejasi Bhatnagar}

\begin{abstract}
The purpose of this paper is to prove a local $p$-adic monodromy theorem for ordinary abelian surfaces and K3 surfaces with bad reduction in characteristic $p.$ As an application, we get a finiteness result for the reduction of their Hecke orbits in the case of type II supersingular reduction.
\end{abstract}
\maketitle

\tableofcontents

\section{Introduction}
 Throughout this paper $K$ will denote the local function field $\F_q((t))$ in characteristic $p$. Let $R$ be its valuation ring and $k$ its residue field. In this paper, we aim to extend a $p$-adic  monodromy result for elliptic curves defined over $K$ to  orthogonal Shimura varieties. Let $E$ over $K$ be an ordinary elliptic curve. Then we consider the following Galois representation, also known as the monodromy representation associated to the $p$-power torsion $E[p^n](\overline{K}) = \Z/p^n\Z$ of $E$:
$$\rho_{E}:\Gal(K^{\sep}/K)\rightarrow \Aut(\Z_p) = \Z_p^{\times}$$
The central question that we generalise is the following: how do we describe the above $p$-adic Galois representation ? More precisely, we wish to understand the  Galois action and its ramification.

For elliptic curves we have a complete answer that we briefly describe along with relevant work in this direction. When $E$ has good ordinary reduction, then the reduction map on the $p$-power torsion is Galois invariant and an isomorphism. That is, the action of $\Gal(K^{\sep}/K)$ is unramified.  Same is true in higher dimensions as well. The case of good supersingular reduction is more interesting and goes back to the work of Igusa \cite{igusa}. Igusa studied the monodromy representation of the universal elliptic curve around a supersingular point in $\mathcal{A}_1$ and showed that in such a case, the image of the monodromy representation is $\Z_p^{\times}$.
The proof is hands on and shows that, when $E$ has supersingular reduction, then as we attach (coordinates of) $p$-power torsion to the base field $K$, we \textit{eventually} get totally ramified extensions. A different proof using a ``formal group argument" is given by Katz in \cite{katz} that can be generalised to $p$-divisible groups (see Section \ref{section3}). 
In \cite{chaimonodromy}, Chai extends Katz' argument to a one dimensional $p$-divisible group $\mathscr{G}$ over $R$ with generic height one. Building on the previous work of Gross \cite{gross}, he proves analogous results for ramification and the upper breaks of the abelian field extension obtained from the $K^{\sep}$-torsion points of the \'etale quotient of $\mathscr{G}$. As a consequence of these results, the image of the associated one dimensional Galois representation is open in $\Z_p^{\times}$. In higher dimensions, the first result is due to Ekedahl \cite{ekedahl}, utilising deformation theory and   arguments similar to Katz to show that the monodromy of the universal deformation of a product of supersingular elliptic curves is ``big". Much like Igusa's result, we expect the $p$-adic monodromy of a family of abelian varieties to be large, as explained in \cite{chaimethodsmonodromy}. However, not much is known about local monodromy representations. For instance, analogous to Ekedahl's result, Chai asks the following question in \cite{chaimonodromy} which is a direct generalization of Igusa's result in higher dimension.
\begin{quest}(Chai)\label{monodromy}
 Let $A$ over $K$ be an ordinary abelian variety with good supersingular reduction. What can we say about the local monodromy representation associated to its $p$-power torsion points?
 \end{quest}
  
In such a case, we expect the image of inertia to have finite index in the image of the Galois group. The characteristic zero version of this question is one of the main results of \cite{KLSS}. The methods in the paper use the theory of isocrystals over $\Q_p$ which is harder to exploit in the function field setting. This summarises the story when $E$ has good reduction.

As a result of Tate's uniformisation theorem, it is straightforward to understand the Galois action when $E$ has semi-stable reduction. The action turns out to be quite different in that, we only see inseparable extensions of $K,$ once we attach the $p$-power torsion of $E(K)$. We discuss this in detail in Section \ref{badred} along with Katz' proof of Igusa's result in Section \ref{section3}. 
\subsubsection{Main results}In the spirit of the previous results, we prove an analogous monodromy result for abelian surfaces and K3 surfaces with semi-stable reduction. We state our main results below:
\begin{thm}[Monodromy of abelian surfaces]\label{thm1}
Let $A$ be an ordinary abelian surface over $K$ with semi-stable reduction. Denote by $A_0$ its reduction over $k$. Let
$$\rho_{A}:\Gal(K^{\sep}/K)\rightarrow \GL_2(\mathbb{Z}_p)$$ be the monodromy representation associated to the $p^n$-torsion $A[p^n](\overline{K}) = (\Z/p^n\Z)^2$ of $A$. Then the Galois representation is described as follows:
\begin{enumerate}

\item Suppose $A$ has semi-abelian reduction. That is, $A_0$ is an extension of a torus by an elliptic curve over $k$:
$$0\rightarrow \bbG_m\rightarrow A_0\rightarrow B_0\rightarrow 0$$
\begin{enumerate}
    \item If $B_0$ is ordinary, then the action of the inertia subgroup is unipotent.
    \item If $B_0$ is supersingular, then the image of the inertia subgroup has finite index in the entire image of the Galois group.

\end{enumerate}
    \item Suppose $A$ totally degenerates, that is $A_0\simeq \bbG_m^2$, then the Galois group has trivial image. 

\end{enumerate}

\end{thm}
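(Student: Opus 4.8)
The plan is to pass from the abelian surface to its non-archimedean uniformization and read the monodromy representation off the induced filtration on the $p$-adic Tate module, thereby reducing every case to the good-reduction monodromy of an elliptic curve treated in Section \ref{section3}. First I would invoke the Mumford--Raynaud uniformization of a semistable abelian variety: there is a semiabelian variety $G$ over $K$ fitting in $0\to T\to G\to B\to 0$, where $T$ is a torus of dimension $r$ equal to the toric rank of $A_0$ and $B$ is an abelian variety with good reduction $B_0$, together with a period lattice $M\cong\Z^r$ such that $A^{\mathrm{an}}=G^{\mathrm{an}}/M$. Applying the snake lemma to multiplication by $p^n$ and passing to the limit yields exact sequences of $p$-divisible groups $0\to G[p^\infty]\to A[p^\infty]\to M\otimes\Q_p/\Z_p\to 0$ and $0\to T[p^\infty]\to G[p^\infty]\to B[p^\infty]\to 0$.

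The essential characteristic-$p$ input comes next. Since $\Char K=p$, the multiplicative part $T[p^\infty]\cong\mu_{p^\infty}^r$ is infinitesimal, so $T[p^\infty](\overline K)=0$ and $T$ contributes nothing to the separable torsion. Hence on maximal étale quotients one gets $0\to B[p^\infty]^{\mathrm{et}}\to A[p^\infty]^{\mathrm{et}}\to M\otimes\Q_p/\Z_p\to 0$, and on Tate modules a filtration $0\to T_p B^{\mathrm{et}}\to T_p A^{\mathrm{et}}\to M\to 0$. Ordinarity of $A$ forces the étale rank to be $2$, so $\mathrm{rk}\,T_pB^{\mathrm{et}}=2-r$: thus $B$ is an ordinary elliptic curve over $K$ when $r=1$ and $B=0$ when $r=2$. (Note that $B$ ordinary over $K$ is compatible with $B_0$ being supersingular, since in equal characteristic the $p$-rank may drop under specialization.) With respect to this filtration $\rho_A$ is block upper triangular, with lower-right block the action $\psi$ on the period lattice $M$ and upper-left block the monodromy $\chi_B$ on the étale Tate module of $B$.

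The three cases now follow by analyzing the two diagonal blocks on the inertia subgroup $I$. Because the reduction is semistable, inertia acts trivially on the character lattice of the toric part, so $\psi|_I=1$ and $\psi$ has finite image. The block $\chi_B$ is exactly the monodromy of an ordinary elliptic curve over $K$ with good reduction, so the dichotomy recalled in Section \ref{section3} applies: if $B_0$ is ordinary then $\chi_B$ is unramified, while if $B_0$ is supersingular then by Igusa/Katz the image $\chi_B(I)$ is open in $\Z_p^\times$. Thus in case 1(a) both diagonal blocks are trivial on $I$, so $\rho_A(I)$ lies in the unipotent radical and inertia acts unipotently. In case 1(b), $\chi_B(I)$ is open in $\Z_p^\times$; since $\rho_A(\Gal(K^{\sep}/K))/\rho_A(I)$ is a quotient of $\Gal(K^{\sep}/K)/I\cong\widehat{\Z}$ whose diagonal part is bounded by $[\chi_B(\Gal(K^{\sep}/K)):\chi_B(I)]\cdot|\mathrm{im}\,\psi|<\infty$, and a commutator computation using the open image of $\chi_B(I)$ shows the unipotent parts of $\rho_A(\Gal(K^{\sep}/K))$ and $\rho_A(I)$ differ by finite index, the image of inertia has finite index in the image of Galois. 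Finally, in case 2 ($r=2$, $B=0$) every $p$-power torsion point is, via the uniformization, a coordinatewise $p^n$-th root of a period lying in $K^\times$; in characteristic $p$ such roots generate purely inseparable extensions, so all torsion points lie in $K^{\mathrm{perf}}$ and are fixed by $\Gal(K^{\sep}/K)$, giving trivial image.

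I expect the main obstacle to be the second step: correctly transporting the generic-fiber uniformization into a statement about the Galois action on $\overline K$-torsion in characteristic $p$, in particular verifying that the toric part drops out and that all ramification is concentrated in the elliptic quotient $B$, so that the known good-reduction results can be applied verbatim. The secondary technical point is the finite-index bookkeeping in case 1(b), where one must control the unipotent entry against the open diagonal image via commutators.
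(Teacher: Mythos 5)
Your proposal is correct and follows essentially the same route as the paper: Raynaud uniformization $A\simeq Z/M$ with $Z$ an extension of a torus by an elliptic curve $B$ of good reduction, the resulting block upper-triangular shape of $\rho_A$, vanishing of the toric contribution to separable $p$-torsion in characteristic $p$, Igusa's theorem applied to $B$ in the supersingular case, and inseparability of the period roots in the totally degenerate case. Your commutator bookkeeping in case 1(b) is the group-theoretic translation of the paper's Lemma \ref{totram} (``an unramified subextension would force an abelian Galois group''), so the two arguments coincide in substance.
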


We note that case (1b) is an analogue of Igusa's result. In this case we crucially need the fact that $B_0$ is an elliptic curve that allows us to build on Igusa's proof in dimension one. All the other cases are easier to prove and can be generalised. Extending our strategy to higher dimensions will first require  us to answer Question \ref{monodromy}, stated above. The main ingredient of the proof of Theorem \ref{thm1} involves Raynaud extensions that serve as uniformising spaces for abelian varieties with semi-stable reduction. We elaborate more on this in Section \ref{strategy} and Section \ref{raynauduniformisation}.

\subsubsection{} We prove a more general monodromy result for orthogonal Shimura varieties associated to quadratic lattices of signature $(n,2)$. As a special case $n=3$, recovers Theorem \ref{thm1} for abelian surfaces, while for $n\leq 19$, we get a monodromy theorem for moduli spaces that parametrise K3 surfaces with certain line bundles on it. To state the results for K3 surfaces we introduce some terminology first. Let $X/K$ be an ordinary K3 surface with bad reduction. We consider $H^{2}_{\text{cris}}(X)(-1)$, the Tate twist of its crystalline cohomology. This is an $F$-crystal and carries an action of the ``Frobenius". Our primary object of study is the monodromy representation  associated to the  unit root crystal of the primitive cohomology:  $$\rho_{K3}:\Gal(K^{\sep}/K)\rightarrow \SO_{n}(\Z_p)$$  
 Orthogonal Shimura varieties have two kinds of Baily-Borel boundary components: Zero dimensional and one dimensional components that are isomorphic to the modular curve $\mathcal{A}_1$. If a K3 surface specialises to a zero dimensional boundary component we say it has type III reduction. Whereas  if it specialises to a one dimensional boundary, we say it has type II reduction. Furthermore, if the K3 surface reduces to the ordinary stratum of $\mathcal{A}_1$, we say that the K3 surface has type II ordinary reduction, while if it reduces to the supersingular stratum, we say that it has type II supersingular reduction. 

\begin{thm}[Monodromy of K3 surfaces]\label{2}
Let $X$ be an ordinary K3 surface over $K$ with rank $n$ primitive cohomology associated to $H^2_{\text{cris}}(X)(-1)$.   Write $$\rho_X: \Gal(K^{\sep}/K)\rightarrow \SO_{n}(\Z_p)$$ to be the associated monodromy representation.

\begin{enumerate}
\item Suppose that $X$ degenerates to the one dimensional boundary $\mathcal{A}_1$, that is, it has type II reduction.

\begin{enumerate}
\item If $X$ has type II ordinary reduction then the action of the inertia subgroup is unipotent.
\item If $X$ has type II supersingular reduction, then the image of the inertia subgroup has finite index in the entire image of the Galois group. 
\end{enumerate}
\vspace{1mm}
\item Suppose that $X$ has type III reduction, then the associated Galois representation $\rho_X$ has trivial image.
\end{enumerate}
\end{thm}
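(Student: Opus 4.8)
The plan is to run the orthogonal analogue of the proof of Theorem \ref{thm1}, organising everything around the slope filtration of the primitive crystal and the degeneration data recorded by the Baily--Borel boundary. The primitive crystal $P=H^2_{\text{cris}}(X)(-1)_{\text{prim}}$ is a self-dual $F$-crystal of rank $n$ for the symmetric pairing coming from cup product, and ordinarity forces its Newton and Hodge polygons to coincide, producing a slope filtration whose extreme graded pieces are lines that are mutually dual under the pairing and whose middle graded piece is the slope-zero (unit-root) part. The Galois action on the \'etale realisation of this filtered object preserves the pairing, which is exactly what lands $\rho_X$ in $\SO_n(\Z_p)$. To access this \'etale realisation and to match reduction types, I would pass through the Kuga--Satake abelian variety $\KS(X)$: the representation $\rho_X$ is then cut out inside the monodromy of $\KS(X)$, and the Baily--Borel degeneration type of $X$ is translated into the degeneration type of $\KS(X)$. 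This dictionary is what makes the case $n=3$ coincide with Theorem \ref{thm1}.

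For part (2), type III reduction, $X$ specialises to a zero-dimensional boundary component, and I would show that the degeneration is maximally multiplicative: the unit-root crystal is entirely of toric type, so the $p$-power torsion of the associated $p$-divisible group is supplied by tori and, exactly as in the Tate-uniformisation discussion preceding Theorem \ref{thm1}, generates only inseparable extensions of $K$. As $\Gal(K^{\sep}/K)$ detects no inseparable extension, $\rho_X$ has trivial image. Under the dictionary above this is the statement that a type III degeneration of $X$ forces $\KS(X)$ to be totally degenerate, so the conclusion is imported from Theorem \ref{thm1}(2).

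For part (1), type II reduction, the boundary is the modular curve $\mathcal{A}_1$, so the degeneration carries a distinguished elliptic curve $E_0/k$ alongside a purely multiplicative part. I would split the inertia action accordingly. The toric factor contributes a unipotent block, its graded pieces being unramified and its extension class accounting for the unipotency, exactly as the $\bbG_m$-part does in the semi-abelian case of Theorem \ref{thm1}(1). It remains to understand the block governed by $E_0$. If $E_0$ is ordinary (type II ordinary), this block is unramified on inertia, so the whole inertia image is unipotent and (1a) follows. If $E_0$ is supersingular (type II supersingular), this block is the monodromy of a supersingular elliptic curve, and Igusa's theorem \cite{igusa} (see Section \ref{section3}) makes its inertia image large; combined with the fact that the remaining graded pieces are unramified, the inertia image of $\rho_X$ has finite index in the full image, giving (1b).

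The hardest step is (1b), just as for Theorem \ref{thm1}(1b): the finite-index statement is not formal and cannot be read off the semisimplification, since it is a genuine assertion about ramification. The plan is to isolate the rank-two sub-object attached to $E_0$ in the filtered unit-root crystal, identify its generic \'etale quotient with the monodromy module studied by Igusa and Katz for a supersingular elliptic curve, and transport Igusa's openness statement across this identification. The delicate point is to verify that the passage from the abstract orthogonal degeneration to an honest supersingular elliptic curve on the boundary is compatible with the inertia action, so that no ramification is lost, and that neither the unipotent toric block nor the unramified slope-zero part can cut the inertia image down below finite index. Making this compatibility precise through the Raynaud--Kuga--Satake uniformisation dictionary of Section \ref{raynauduniformisation} is where the main work of the proof lies.
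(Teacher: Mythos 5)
Your overall architecture is the paper's: pass to the Kuga--Satake abelian variety, match its degeneration type to the Baily--Borel boundary type of $X$, uniformise by a Raynaud extension, and feed the supersingular block into Igusa's theorem while the rest contributes unipotently; type III and type II ordinary are handled essentially as in the paper. The first genuine gap is that the step you defer as ``making the dictionary precise'' is in fact the main new content of the proof, and your formulation of it is too weak to run the argument. You work with ``a distinguished elliptic curve $E_0/k$'' on the boundary and a ``rank-two sub-object attached to $E_0$''. But Igusa's theorem (Theorem \ref{ss}) is a statement about an ordinary elliptic curve \emph{over $K$} with supersingular reduction; to invoke it you need the abelian quotient $B$ of the Raynaud extension $Z_{\KS}$, which is a priori an arbitrary $K$-point of the Baily--Borel boundary $\mathcal{A}_{d/2}$, to be isogenous over $K$ to $E^{d/2}$ for a single elliptic curve $E/K$. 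This is Proposition \ref{mainthmred}/Corollary \ref{prodec}, and it is proved by computing the weight filtration that the Kuga--Satake embedding induces on $H=Cl(V)$: for a two-dimensional isotropic $I=\langle e_1,e_2\rangle$ one finds $W_{-2}H=\im(e_1e_2)$ of dimension $d/2$ and $H_{-1}\cong I\otimes Cl(I_0\oplus I_1)$, so the weight-one Hodge structure on $\gr_1(H)$ is $d/2$ copies of the one on $I$, and this is then transferred to characteristic $p$ via the integral models of the toroidal boundary. Nothing in your outline produces this product decomposition; without it ``the block governed by $E_0$'' is a $d$-dimensional abelian variety over $K$ to which Igusa's one-dimensional argument does not apply.

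The second gap is in (1b): finite index of inertia requires bounding the \emph{unramified} part of the whole torsion field, including the extensions generated by the points $z_i^{(n)}$ lying over the lattice $M$. You assert that ``the remaining graded pieces are unramified'' and that they ``cannot cut the inertia image down below finite index'', but unramifiedness of the graded pieces says nothing about the extension classes: if the $z_i^{(n)}$ generated unbounded unramified extensions, the inertia image would have infinite index even though every graded piece behaves well. The paper closes this with Lemma \ref{totram} (reused in Section \ref{ordredks}): any unramified subfield of $K(z^{(n)},y^{(n)})$ over the torsion field would force $K(z^{(1)},y^{(n)})/K$ to be abelian, contradicting the explicit block-triangular, non-abelian shape of the Galois group. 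Your proposal flags the issue but contains no substitute for this step.
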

\subsubsection{Strategy to prove the monodromy theorems}\label{strategy}
Generalising Tate's theorem to higher dimensions, Raynaud in \cite{raynaud} gives a uniformisation of abelian varieties with semi-stable reduction. This is discussed in detail in Section \ref{raynauduniformisation} of the paper. If $A$ is an abelian variety over $K$ with semi-stable reduction, then $A(\overline{K}) = Z(\overline{K})/M$ where $Z$ is a semi-abelian variety over $K$ and $M$ is a lattice in $Z$.  We call $Z$ the \textit{Raynaud extension} of $A$. Hence we get the following diagram: 

$$\xymatrix{ &M\ar[d]&\\ T\ar[r] & Z \ar[r] \ar[d]&B& \\
& A&  }$$

 Here $B$ is an abelian variety and $T$ is a torus. In particular, when $A$ has dimension two, then either $B=0$ in which case $Z$ is a torus or $B$ is an elliptic curve. Using this information, we write a bases of the $p$-power torsion of $A$ and study Galois action along with its ramification. 

 In order to study the monodromy of K3 surfaces and more generally orthognal Shimura varieties, one of the key tools that we use in this paper is the Kuga-Satake construction. This associates to every K3 surface, an abelian variety (of high dimension). In Section \ref{raynaudks} we prove that the Kuga-Satake abelian variety mirrors the reduction type of its K3 surface. See \cite{kugaSatakedeg} for a similar proof of this fact, however, this paper exploits Hodge theory in a different set up than ours. Furthermore, we also show that the Raynaud extension of the Kuga-Satake abelian variety has a simple description. This is recorded in the following proposition of this paper.

\begin{prop}\label{mainthmred}
Let $X$ be an ordinary K3 surface over $K$ with bad reduction.   Write $\KS(X)$ to be the associated Kuga-Satake abelian variety  with $d = \dim(X).$ 
\begin{enumerate}
\item Suppose $X$ has type II reduction. Then $\KS(X)$ has semi-abelian reduction. Moreover, the Raynaud extension of the Kuga-Satake abelian variety $Z_{\KS}$ over $K$ is an extension
$$0\rightarrow \bbG_m^{d/2}\rightarrow Z_{\KS}\rightarrow B\rightarrow 0$$
where $B$ is isogenous to a product of $d/2$ copies of an elliptic curve $E$ over $K$.
\vspace{1mm}
\item Suppose $X$ has type III reduction. Then the associated Kuga-Satake abelian variety has totally bad reduction. That is, $B=0$ in the above exact sequence.
\end{enumerate}

\end{prop}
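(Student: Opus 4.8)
The plan is to reduce both statements to a computation inside the Clifford algebra, using the functoriality of the Kuga--Satake construction. Recall that $H^1$ of $\KS(X)$ is, as a Galois module, the Clifford module $\mathrm{Cl}(V)$ (or its even part), where $V$ is the orthogonal Galois representation attached to the primitive part of $H^2_{\mathrm{cris}}(X)(-1)$, and that $\Gal(K^{\sep}/K)$ acts through the spin representation $\mathrm{Spin}(V)\to\GL(\mathrm{Cl}(V))$ given by left multiplication. First I would record that $X$ has type II or III (semi-stable) reduction precisely when inertia acts unipotently on $V$; since the spin representation is a morphism of algebraic groups, inertia then acts unipotently on $\mathrm{Cl}(V)$, so by Grothendieck's criterion $\KS(X)$ acquires semi-abelian reduction. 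This already gives the first assertion of (1) and reduces everything to identifying the torus rank, which by the theory of the monodromy weight filtration on $H^1$ of a semi-abelian scheme equals the rank of the monodromy operator $N_{\KS}$ on $H^1(\KS(X))$.

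The key point is that the nilpotent monodromy operator $N$ on $V$ lifts canonically to an element $\tilde N\in\mathrm{Cl}^2(V)\cong\mathfrak{so}(V)$, and $N_{\KS}$ is simply left multiplication $L_{\tilde N}$ on the Clifford module. For type II reduction the Kulikov classification gives $N\neq 0$, $N^2=0$; since $N\in\mathfrak{so}(V)$ has even rank, $N$ lies in the minimal nilpotent orbit and may be normalised as $\tilde N=e_1e_2$, the Clifford product of two orthogonal isotropic vectors spanning $\im(N)$. Because $\langle e_i,e_i\rangle=0$, each of $L_{e_1},L_{e_2}$ has half-rank on the Clifford module, so $L_{\tilde N}=L_{e_1}L_{e_2}$ has rank $\tfrac14\dim H^1(\KS(X))=\tfrac12\dim\KS(X)$, i.e. the torus rank is $d/2$. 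The toric part is split once its character lattice is identified with the monodromy-invariant vanishing cycles $\im(N)=\langle e_1,e_2\rangle$, on which the Galois action is trivial.

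For type III reduction one has $N^2\neq 0$, $N^3=0$; the corresponding nilpotent has a single Jordan block of size three, so $\tilde N=wu$ with $u$ isotropic and $w$ the anisotropic middle vector of the block (so $u\perp w$). Now $L_w$ is invertible, as $\langle w,w\rangle\neq 0$, while $L_u$ again has half-rank; hence $L_{\tilde N}$ has rank $\tfrac12\dim H^1(\KS(X))=\dim\KS(X)$, so the torus rank equals $\dim\KS(X)$ and $\KS(X)$ is totally degenerate, giving $B=0$. In both cases $\tilde N^2=0$ in $\mathrm{Cl}(V)$ (from $e_1e_2e_1e_2=0$ and $wuwu=0$), which re-confirms $N_{\KS}^2=0$ and is consistent with the semi-abelian reduction established above.

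It remains, in the type II case, to identify the abelian part $B$, whose Hodge/Galois structure is the weight-one graded piece $\ker N_{\KS}/\im N_{\KS}$. Decomposing the spinor module for the weight filtration on $V$ as $S_V\cong S_{\langle e_1,e_2,f_1,f_2\rangle}\otimes S_W$ and computing $L_{\tilde N}$ on the four-dimensional hyperbolic factor $S_{\langle e_1,e_2,f_1,f_2\rangle}=\wedge^{\bullet}\langle e_1,e_2\rangle$, one finds its middle cohomology to be exactly the two-dimensional piece $\mathrm{Gr}^W_1(V)$. For a type II degeneration this piece is precisely $H^1(E)$, where $E$ is the elliptic curve attached to the point of the boundary $\mathcal{A}_1$ to which $X$ specialises; the remaining factor $S_W$ contributes a trivial multiplicity space, and the dimension count $\dim\mathrm{Gr}^W_1(\mathrm{Cl}(V))=d$ forces multiplicity $d/2$, so $B$ is isogenous to $E^{d/2}$. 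I expect this last identification --- matching the weight-one graded piece of the Kuga--Satake structure with $H^1(E)$ for the boundary elliptic curve, with the correct multiplicity and up to isogeny --- to be the main obstacle, as it requires tracking the Kuga--Satake functor through the degeneration rather than merely at the level of ranks.
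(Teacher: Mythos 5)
Your Clifford-algebra computation is, at its core, the same as the paper's: the paper splits the weight filtration on $H=Cl(V)$ by a cocharacter $\mu$ lifting $\mu_0:\bbG_m\to\SO(V)$ and identifies $W_{-2}H=\im(e_1e_2)$ in type II, resp.\ $\im(e_1)$ in type III, with $\gr_{-1}H=\langle e_1,e_2\rangle\otimes Cl(I_0\oplus I_1)$; your $L_{\tilde N}$ with $\tilde N=e_1e_2$, resp.\ $\tilde N=wu$, is the infinitesimal version of the same splitting and yields the same ranks. Two local cautions: ``two half-rank operators compose to a quarter-rank operator'' is not a valid inference on its own ($L_{e_1}L_{e_1}=0$), and you need the explicit decomposition $Cl(V)=Cl(U_1)\otimes Cl(U_2)\otimes Cl(V')$ of Lemma \ref{dimiso} to see that $e_1e_2\,Cl(V)$ really has dimension $d/2$; also, unipotence of inertia on $V$ only pins down its image in $\GSpin(V)$ up to the centre (the spin representation is a representation of $\GSpin$, which surjects onto $\SO$, not the reverse), so the deduction that inertia acts unipotently on $Cl(V)$ needs a word about the similitude/spinor-norm factor.

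The genuine gap is that your argument lives entirely in characteristic zero, while the proposition concerns $K=\F_q((t))$. You invoke the Kulikov classification, limit mixed Hodge structures, and ``the elliptic curve attached to the boundary point of $\mathcal{A}_1$'' --- none of which is available over $K$ without further input: there is no Kulikov classification in characteristic $p$, and in this paper ``type II/III reduction'' is defined by which boundary stratum of the integral model the point specializes to, not by a semistable model of $X$. The paper's proof spends most of its effort precisely here: it performs the Hodge-theoretic computation on the boundary of the toroidal compactification over $\C$, and then uses the integral models of the toroidal and Baily--Borel compactifications from \cite{keerthitor} and the commutative diagram of boundary strata over $\Z_{(p)}$ to transport the universal Raynaud extension, and the boundary map $\mathcal{A}_1\hookrightarrow\mathcal{A}_{d/2}$, $E\mapsto E^{d/2}$, to characteristic $p$. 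Relatedly, the step you flag as the ``main obstacle'' --- upgrading the abstract identification $\gr_{-1}H\cong I^{\oplus d/2}$ to ``$B$ is isogenous to $E^{d/2}$ for an elliptic curve $E$ over $K$'' --- is exactly what this integral boundary theory delivers, so the obstacle is real and your proposal does not overcome it.
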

Under the Kuga-Satake map, it is enough to study the monodromy of the Kuga-Satake abelian variety. Indeed the Galois representation of the Kuga-Satake abelian variety is a lift of the representation associated to the K3 surface. By Proposition \ref{mainthmred} we know that associated Kuga-Satake abelian variety KS(X) has bad reduction. In particular, we can use the
 Raynaud extension of $\KS(X)$ to understand the associated monodromy. When the Kuga-Satake abelian variety completely degenerates, the result is trivial and follows from the discussion in Section \ref{badred} about monodromy of abelian varieties that totally degenerate.  The interesting case that requires more work is when $\KS(X)$ has semi-abelian reduction. Even though $\KS(X)$ has high dimension, its Raynaud extension $Z_{\KS}$ turns out to be an extension of a torus by a product of $d/2$ copies of an elliptic curve where $d = \dim\KS(X)$. We elaborate more on this fact below. Once we have such a description of $Z_{\KS}$, we similarly write down a bases of the $p$-power torsion of $\KS(X)$ and study the ramification.  The general monodromy result is stated as Theorem \ref{Gspinmainthm} for orthogonal Shimura varieties.

\subsubsection{Raynaud extensions and the philosophy of the problem:}\label{philosophy}
 The data of Raynaud extensions of abelian varieties with bad reduction is parametrized by the boundary of Siegal Shimura varieties. We describe this briefly. The Siegal Shimura variety $\mathcal{A}_g$, that is the moduli space of $g$-dimensional abelian varieties  admits many compactifications. The minimal compactification is called the Baily-Borel compactification. This is however not smooth when $g>1$. The toroidal compactification is smooth and admits a map to the Baily-Borel compactification. We have the following maps between the boundaries:
$$\text{Formal completion along the toroidal boundary $\rightarrow$ Toroidal boundary $\rightarrow$ Baily-Borel boundary }$$
The boundary components carry a variation of mixed Hodge structures. Roughly speaking, the formal completion  parametrises the data of the Raynaud extension $Z$ along with an embedding $\alpha: \Z^r\hookrightarrow Z$ of a lattice.  While the mixed Hodge structure on the toroidal boundary corresponds to the universal Raynaud extension as a semi-abelian scheme over the toroidal boundary. The last map from the toroidal boundary corresponds to taking the quotient of the mixed Hodge structure to get a pure Hodge structure of weight one. Hence the Baily-Borel boundary parametrises the abelian quotient of the Raynaud extension.  Because of the theory of good integral models, this makes sense in characteristic  $p$ as well.
For example consider $\mathcal{A}_2$, the moduli space of abelian surfaces over $\F_p$. Then the Baily-Borel boundary is either a modular curve $\mathcal{A}_1$ or zero dimensional. This confirms that  the Raynaud extension is semi-abelian or a two dimensional torus corresponding to the two boundary components respectively. 

\subsubsection{} Consider the Kuga-Satake abelian variety $\KS(X)$ that reduces  to the boundary of the (compactified) Siegal moduli space $\mathcal{A}_d$.
Write $Z_{\KS}$ as an extension of a torus by an abelian variety $B$. From the above discussion $B$ is parametrised by some Baily-Borel boundary component of $\mathcal{A}_d$. Hence, apriori $B$ is a $K$-point of $\mathcal{A}_{g}$ where $g<d$. However, we expect $B$ to have a nice description once we consider the two boundary components of the moduli space of K3 surfaces. The abelian variety $B$ is trivial if we assume that the K3 surface reduces to the zero dimensional boundary. While on the other hand, when the K3 surface reduces to a point on $\mathcal{A}_1$, then $B$ is a $K$-point in a sub-locus of the Baily-Borel boundary $\mathcal{A}_{d/2}$ of the Siegal moduli space. This is proved in Corollary \ref{prodec}. This sub-locus is precisely given by the map between the boundary components of the respective moduli spaces: $$\mathcal{A}_1\hookrightarrow \mathcal{A}_{d/2}: E\mapsto E^{d/2}$$ Hence, the abelian quotient $B$ of the Raynaud extension of $\KS(X)$ is expected to be isogenous to $d/2$ copies of an elliptic curve. The proof of this fact is presented in Section \ref{raynaudks} and exploits Hodge structures on the toroidal boundary that linearises the data of Raynaud extensions along with the Kuga-Satake map in characteristic zero. Using the theory of good integral models for toroidal compactifications, we deduce the results in characteristic $p$.

\subsubsection{Finiteness of the reduction of Hecke orbit.} A direct consequence of the monodromy results is the finiteness of the reduction of $p$-power Hecke orbit of an ordinary point of an orthogonal Shimura variety (and its associated Kuga-Satake abelian variety) with type II supersingular reduction. Following the same ideas of \cite{KLSS} Corollary 2.10, we prove this in Corollary \ref{finitehecke}. 
\subsubsection{Organisation of the paper}
This paper is organised as follows. In Section \ref{section3} and \ref{badred} we review Katz's proof for understanding monodromy at a super-singular point in $\mathcal{A}_1$, along with  the Tate uniformisation to compute the monodromy for abelian varieties with complete bad reduction. In Section \ref{raynauduniformisation} and \ref{6} we review Raynaud's uniformisation theorem for abelian varieties with semistable reduction and then prove the monodromy theorem for abelian surfaces with bad reduction. Moving on to K3 surfaces we  review briefly GSpin-Shimura data, Kuga-Satake construction and toroidal compactification in Section \ref{7} and \ref{8}. In Section \ref{raynaudks}, we compute the mixed Hodge structures on the boundary that determine the Raynaud extension of the Kuga-Satake abelian variety. Finally in Section \ref{10} and \ref{k3}, we prove the monodromy theorem for the Kuga-Satake abelian variety and orthogonal Shimura varieties.  

\subsubsection*{Acknowledgments}

I am grateful to my advisor, Ananth Shankar for proposing possible generalizations of Igusa's result and many valuable conversations about this paper. I would also like to thank Keerthi Madapusi Pera for clarifying comments about Raynaud extensions. 
\section{Large monodromy of elliptic curves in characteristic $p$}\label{section3}

 Throughout this section we fix $E/K$ to be an ordinary elliptic curve with supersingular reduction. We have the following theorem due to Igusa.
\begin{thm}[Igusa]\label{ss}
 Consider a sequence of points of $E(\overline{K}):$
$$y^{(1)}, y^{(2)}, y^{(3)} \dots$$
such that $p y^{(1)} = e$ and $py^{(n+1)} = y^{(n)}$
for $n\geq 1.$ 
Let $K(y^{(n)})$ denote the field extension obtained by adjoining $y^{(n)}$ to $K$. Then there exists some $n_0$ such that for all $n> n_0,$ the field extension $K(y^{(n)})^{\text{sep}}$ is a totally ramified extension of $K(y^{(n_0)})$.
\end{thm}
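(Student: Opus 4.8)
The plan is to transport the entire question into the formal group of $E$ over $R$ and then extract the ramification from the $[p]$-power series.

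First I would use the supersingularity of the reduction to trap the whole tower inside the formal group. As $E$ has good reduction it extends to an elliptic curve $\mathcal E$ over $R$, and by properness each $y^{(n)}$ gives a point of $\mathcal E(\overline R)$. Since the special fibre $E_0$ is supersingular, $E_0[p^n](\overline k)=0$ for all $n$, so every $y^{(n)}$ reduces to the identity of $E_0$; hence its formal parameter lies in $\m_{\overline K}$ and the sequence $\{y^{(n)}\}$ lives in $\hat{E}(\m_{\overline K})$, the formal group of $\mathcal E$. From here I work entirely inside this one-dimensional formal group.

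Next I record the two heights that collide in $\hat{E}$. Writing $[p]_{\hat E}(X)=\sum_{i\ge 1}a_iX^i\in R[[X]]$, supersingularity of $E_0$ (special height $2$) gives $a_i\in\m_R$ for $i<p^2$ and $a_{p^2}\in R^\times$, whereas ordinarity of $E$ over $K$ (generic height $1$) gives $a_i=0$ for $i<p$ and $a_p\ne 0$; thus $a_p$ is a nonzero element of $\m_R$. This collision is already the conceptual source of ramification: if $\rho$ were unramified, the \'etale quotient of $E[p^\infty]$ over $K$ would extend to an \'etale $p$-divisible group over $R$, which is impossible since the connected--\'etale sequence of $\mathcal E[p^\infty]/R$ has trivial \'etale part. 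To make this quantitative I would feed the tower through the Newton polygon, solving $[p]_{\hat E}(X)=y^{(n-1)}$ for $y^{(n)}$: because $a_{p^2}$ is a unit while $a_p\in\m_R$, the valuations $v(y^{(n)})$ (normalised by $v(t)=1$) should acquire denominators divisible by growing powers of $p$, forcing wildly ramified extensions at every deep level.

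The delicate point, and the step I expect to be the main obstacle, is to single out among the $p^2$ preimages of $y^{(n-1)}$ the separable ones---the geometric points of the \'etale quotient---because over the imperfect field $K$ the crude leading-term balance of the Newton polygon detects the infinitesimal multiplicity-$p$ part, and only the finer cancellation carried by the height-one term $a_pX^p$ governs the separable solution. To finish I would then (i) prove that the residue field of $K(y^{(n)})^{\sep}$ stabilises, e.g. by base-changing to the completed maximal unramified extension $\breve K$, over whose algebraically closed residue field every finite extension is automatically totally ramified, and bounding the residue growth carried by the angular components of the $y^{(n)}$; and (ii) show that past the stabilisation index $n_0$ each further step is nontrivial and totally ramified, which is exactly the assertion that $\rho(I_K)$ is open in $\Z_p^\times$. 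This last computation is Igusa's original wild-ramification estimate for the formal group of a supersingular elliptic curve.
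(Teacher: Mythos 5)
Your framing coincides with the paper's: push the tower into the formal group $\widehat{\mathcal{E}}$ over $R$, note that supersingularity of the special fibre against ordinarity of the generic fibre forces the coefficient of $X^{p}$ in $[p](X)$ to be a nonzero element of $\m_R$ while the coefficient of $X^{p^2}$ is a unit, and then run Newton polygons up the tower. That setup is right, but the argument has a genuine gap at exactly the point you yourself flag as ``the main obstacle'': you never actually isolate the separable solutions from the inseparable multiplicity, and you never carry out the Newton polygon computation, deferring it to ``Igusa's original wild-ramification estimate.'' The missing idea is the factorization of $[p]$ through Frobenius: in characteristic $p$ one has $[p](X)=g(X^{p})$, so after Weierstrass preparation the equation $[p](y^{(n)})=y^{(n-1)}$ splits into a purely inseparable part (contributing $[K(y^{(n)})^{\insep}:K]=p^{n}$) and the degree-$p$ Verschiebung equation $A_1^{p^{n}}y+A_2^{p^{n}}y^{p}=y^{(n-1)}$ with $v_K(A_1)\geq 1$ and $v_K(A_2)=0$. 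It is the Newton polygon of \emph{this} degree-$p$ polynomial, not of the full degree-$p^{2}$ series, that computes $K(y^{(n)})^{\sep}$; without this reduction the ``finer cancellation'' you allude to is never exhibited.

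Two further steps are then needed and are absent. First, a lower bound on the denominator of $v(y^{(n)})$ gives only a lower bound on the ramification index; to get \emph{total} ramification one needs the squeeze $e\leq [K(y^{(1)})^{\sep}:K]\leq p-1$ against $v(y^{(1)})=v_K(A_1)/(p-1)$ at the first layer (and degree $\leq p$ against slope $v(y^{(n-1)})/p$ at later layers), forcing ramification index to equal degree. Second, the reason the statement asserts total ramification only for $n>n_0$ is that when $v_K(A_1)=m>1$ the early layers need not be totally ramified; the saving observation is that the middle vertex $(1,\,p^{n}m)$ of the $n$-th Newton polygon grows like $p^{n}m$ while the left vertex $(0,\,v(y^{(n-1)}))$ stays bounded and eventually decreases, so for $n$ large the polygon is a single segment of slope $v(y^{(n-1)})/p$ and each subsequent layer is totally ramified of degree $p$. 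Your soft argument via the connected--\'etale sequence shows only that inertia acts nontrivially, which is much weaker than the eventual total ramification (and hence openness of the image in $\Z_p^{\times}$) that the theorem requires.
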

 Let $\overline{R}$ be valuation ring of $\overline{K}$ and $\mathscr{M}$ its maximal ideal. Let $\mathcal{E}$ over $R$ be the N\'eron model of $E/K$ with special fibre $E_0$. Denote by $\widehat{\mathcal{E}}$ the formal group of $E$.  Recall the following short exact sequence that we get from the reduction map: 
$$0\rightarrow  \widehat{\mathcal{E}}(\mathscr{M})\rightarrow \mathcal{E}(\overline{R})\rightarrow E_0(\overline{\F_q})\rightarrow 0$$ 
 We note that the $p$-power torsion is in the kernel of the reduction map above. Hence they lie in the $\mathscr{M}$-points of $\widehat{\mathcal{E}}$. Thus, in order to study the $p$-power torsion of $E$ we look at its formal group. This gives motivation for Katz's argument of the above theorem that we now describe below.
\begin{proof}
Consider the formal group $\widehat{\mathcal{E}}_R$, the formal group of $E/K$. 
For a suitable parameter $x,$ we consider the multiplication by $p$ map $[p]_{R}$ which is a power series of the form $g(t^p)$ for some $g(t)\in R[[t]].$ By the Weierstrass preparation theorem any power series can be decomposed as a product $(u\cdot h)(t)$ where $u$ is a unit  in $R[[t]]$  and $h(t)$ is a polynomial.  
The $p$-torsion of $\mathcal{E}(\overline{R})$ are the roots of $g$, hence it is enough to consider $h(t).$ We write: $$[p]_{R} = A_1(t)x^p+A_2(t)x^{p^2}$$
The formal group of the special fibre is supersingular and thus has height $2.$ Consequently,  $$v_K (A_1(t))\geq 1 \text{ and } v_K(A_2(t))=0$$ We first suppose that $v_K(A_1(t))=1$. In fact, we can choose such a representation of $[p]_R$, when $E$ is the universal elliptic curve around a supersingular point of $\mathcal{A}_1$.  Since the Frobenius acts by $x\mapsto x^p$, the iterates of $V$ act as $V^{(p^n)}(x) = A_1(t)^{p^n}x +A_2(t)^{p^n}x^p.$ We note that $[p](y^{(1)}) = 0,$ while $[p](y^{(n+1)}) = y^{(n-1)}$ for $i\geq 2$. This gives us a system of equations:

$$A_1(t)y^{(1)} +A_2(t)(y^{(1)})^p = 0$$
$$A_1^p(t)y^{(2)} +A_2(t)^p(y^{(2)})^p= y^{(1)}$$
$$\vdots$$
$$A_1^{p^{n+1}}(t)y^{(n+1)} +A_2(t)^{p^{n+1}}(y^{(n+1)})^p = y^{(n)}$$
Notice that $[K(y^{(n)})^{\insep}:K] = p^n$. In order to understand the separable part of the extension, we plot the Newton polygon to conclude the result iteratively. For instance, the Newton polygon from the first equation helps us conclude that $$v_K(y^{(1)}) = \frac{1}{p-1}=\frac{v_{K^{\sep}}(y^{(1)})}{e}$$
       We know that $e\leq [K(y^{(1)})^{\sep}:K] \leq p-1$. But $v_{K^{\sep}}(y^{(1)})\geq 1$ since $y^{(1)}$ reduces to zero modulo the maximal ideal.  This proves equality everywhere and the fact that $y^{(1)}$ is a uniformizer of $K(y^{(1)}).$ We carry on this process to get that $K(y^{(1)},y^{(2)}, \dots y^{(n)})^{\sep}/K$ is a totally ramified extension of degree $(p-1)p^n$.

Next, consider  $v_K(A_1(t)) = m > 1$. Then we see that  $v_{K^{\sep}}(y^{(1)})\leq m$ and we only get full ramification if this is an equality. However, as we draw subsequent Newton polygons, we get that $v(A_1(t)^{p^n}) = p^nv(A_1(t))$ which becomes large as $n$ increases. Thus, this point doesn't affect the Newton polygon for $n$ large enough. As a result,  the slope becomes $1/p$ for some sufficiently large $n_0$, and we see that $K(y^{(n)})/K(y^{(n_0)})$ is a totally ramified extension for all $n>n_0.$ 
\end{proof}

\begin{cor}
Consider the associated Galois representation associated to the $p$-power torsion of $E$ with supersingular reduction:
$$\rho_{\sups}:\Gal(K^{\text{sep}}/K)\rightarrow \Z_p^{\times}$$ Then the image of $\rho_{\sups}$ is open in $\Z_p^{\times}$. Moreover the inertia subgroup has finite index in the image.
\end{cor}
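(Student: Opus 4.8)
The plan is to deduce the corollary directly from the ramification statement of Theorem \ref{ss}. The starting observation is that the image of $\rho_{\sups}$ in $\Z_p^\times$ records exactly how Galois acts on the compatible system $(y^{(n)})_n$ that generates the Tate module $T_p E \cong \Z_p$. Thus understanding the image amounts to understanding which automorphisms of $\Z_p$ are realized by $\Gal(K^{\sep}/K)$ acting on the fields $K(y^{(n)})$, and understanding the inertia image amounts to the same question for the inertia subgroup $I_K \subset \Gal(K^{\sep}/K)$.

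First I would show the image is open. Since $\Z_p^\times$ is a profinite group and the image is a closed subgroup, openness is equivalent to the image being infinite, i.e.\ nontrivial after reduction modulo every $p^n$, or more precisely to the image surjecting onto $(\Z/p^{n})^\times$ up to finite index. I would argue that the action of Galois on the separable torsion points $y^{(n)}$ is nontrivial for all large $n$: by Theorem \ref{ss}, for $n > n_0$ the separable extension $K(y^{(n)})^{\sep}/K(y^{(n_0)})$ is totally ramified of degree growing like $p^{n-n_0}$ (from the slope $1/p$ computation, each step contributes a factor of $p$). Since these degrees are unbounded, the separable part of the tower is infinite, so the image of $\rho_{\sups}$ in $\Z_p^\times$ is infinite, hence a closed infinite subgroup of the one-dimensional $p$-adic Lie group $\Z_p^\times$, which is therefore open.

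Next I would establish that inertia has finite index in the image. This is where the totally ramified conclusion of Theorem \ref{ss} does the real work: because $K(y^{(n)})^{\sep}/K(y^{(n_0)})$ is \emph{totally} ramified for all $n > n_0$, the inertia subgroup of $\Gal(K(y^{(n)})^{\sep}/K(y^{(n_0)}))$ is the whole group. Passing to the limit, the inertia subgroup $I_K$ surjects onto $\Gal(K(y^{(\infty)})^{\sep}/K(y^{(n_0)}))$, which is an open (finite-index) subgroup of the full image $\Gal(K(y^{(\infty)})^{\sep}/K)$. Concretely, the cokernel of inertia inside the image is controlled by the residue field extension, which by total ramification above $K(y^{(n_0)})$ is bounded by the finite extension $K(y^{(n_0)})/K$; hence the index of the inertia image in the full image is finite.

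The main obstacle I anticipate is bookkeeping the precise relationship between the separable degrees and the index $[\,\text{image} : \text{inertia image}\,]$, since Theorem \ref{ss} only guarantees total ramification relative to the intermediate field $K(y^{(n_0)})$ rather than relative to $K$ itself. One must account for the possibly unramified (or tamely ramified) part contributed by the finite initial segment $K \to K(y^{(n_0)})$, and confirm that this contributes only a finite index. This is a finite, one-time correction and does not affect openness, but it is the step requiring care to state cleanly.
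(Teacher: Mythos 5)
Your proposal is correct and follows essentially the same route as the paper: openness is deduced from the image being an infinite closed subgroup of $\Z_p^{\times}$ (infinite because the totally ramified separable degrees in Theorem \ref{ss} are unbounded), and the finite index of inertia comes from the fact that total ramification above $K(y^{(n_0)})$ confines the unramified part of the tower to the finite initial segment. The extra care you flag about the segment $K \to K(y^{(n_0)})$ is exactly the point the paper's one-line argument is implicitly relying on, and your handling of it is right.
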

\begin{proof}
It follows from Theorem \ref{ss} that the image is infinite. As $\Z_p^{\times}$ is Hausdorff, the image of the compact Galois group is closed. Since the infinite closed subgroups of $\Z_p$ (and $\Z_p^{\times})$ are open, the first statement follows. The index of the image of the inertia subgroup equals the degree of the unramified extension obtained by attaching the $p$-power torsion to $K$. This is finite by Theorem $\ref{ss}$.
\end{proof}
\section{Abelian varieties with  totally bad reduction}\label{badred}
In this section we study the $p$-power torsion of abelian varieties that completely degenerate. The following explicit result helps us understand the Galois action. 

\begin{prop}(Tate uniformisation)
Let $A$ be a $g$-dimensional abelian variety over $K$ with totally bad reduction. Up to a base change, we may assume $A_0\simeq \bbG^g_{m,\F_q}$. Then there exists multiplicatively independent elements $q_1,\dots q_g$ in $\bbG_m^g(\overline{K})$ with positive valuation such that we have a Galois invariant group isomorphism of $\overline{K}$-points:
$$\alpha: A(\overline{K})\rightarrow \bbG^g_m(\overline{K})/\langle{q_1,\dots ,q_g}\rangle$$
where $\langle{q_1,\dots q_g}\rangle$ is the multiplicative lattice generated by the $g$-elements in $\bbG^g_m(\overline{K}).$
\end{prop}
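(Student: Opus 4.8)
The plan is to derive this from the rigid-analytic uniformization theory of abelian varieties with totally degenerate reduction, treating the statement as the $B = 0$ specialization of the general Raynaud uniformization reviewed in Section~\ref{raynauduniformisation}. First I would reduce to the split case: by hypothesis the connected component of the special fibre of the N\'eron model of $A$ is a torus of dimension $g$, and after a finite base change — which the phrase ``up to a base change'' permits — this torus becomes split, so that $A_0 \simeq \bbG_{m,\F_q}^g$. This is exactly the setting in which $A$ admits a uniformization by a torus.

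The core step is to produce the uniformizing map. Since $A$ is totally degenerate, its Raynaud extension has trivial abelian part, i.e. $Z = T = \bbG_m^g$, and the uniformization theorem then gives an isomorphism of rigid-analytic groups $A^{\an} \simeq (\bbG_m^g)^{\an}/M$, where $M \simeq \Z^g$ is a lattice embedded via a homomorphism $\alpha\colon M \to \bbG_m^g(\overline{K})$. Passing to $\overline{K}$-points yields the desired group isomorphism, with $\langle q_1,\dots,q_g\rangle$ the image under $\alpha$ of a basis of $M$. For $g = 1$ this is precisely Tate's construction of the Tate curve, where $q$ is the Tate parameter; the general case is the higher-dimensional analogue due to Mumford, Raynaud, and Bosch--L\"utkebohmert.

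Two points then require care. First, the $q_i$ must be multiplicatively independent with positive valuation: multiplicative independence is the statement that $M$ has full rank $g$ and embeds discretely, while positivity of the valuations of the $q_i$ encodes that the quotient is proper — equivalently, that the associated ``period'' pairing $(i,j)\mapsto v_K$ of the relevant coordinate of $q_i$ is positive definite, which is forced by the existence of a polarization on $A$. Second, Galois equivariance of $\alpha$ follows because the uniformization is canonical and functorial in $A$, hence commutes with the action of $\Gal(K^{\sep}/K)$, so the isomorphism on $\overline{K}$-points is automatically Galois invariant. I expect the main obstacle to be not the construction itself — which is a citation of a deep but standard theorem — but the verification of positivity of the valuations, since this is where properness of the abelian quotient (as opposed to a general rigid-analytic torus quotient) is genuinely used.
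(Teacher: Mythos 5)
The paper states this proposition without proof, treating it as a citation of the classical uniformization theorem for totally degenerate abelian varieties; your sketch correctly fills in the standard argument, and it is the same route the paper implicitly relies on, namely the $B=0$ specialization of the Raynaud uniformization reviewed in Section \ref{raynauduniversation} (split the torus after a finite base change, identify $Z=T=\bbG_m^g$, quotient by the lattice $M$, and use the polarization to get the positivity/discreteness of the periods). Your flagged subtlety about positivity of valuations is the right one to worry about, but since $A$ is given as an abelian variety the required positive-definiteness of the period pairing is automatic, so there is no gap.
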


\begin{thm}
We keep the setting of the above theorem. Let $y^{(1)}, y^{(2)},y^{(3)}\dots$ be a sequence of $p$-power torsion points of $A(\overline{K})$ such that $py^{(1) } = e$ and $py^{(n+1)} = y^{(n)}$ for all $n\geq 1.$ Denote by $K(y^{(n)})$ the field extension obtained from attaching $p^{n}$-torsion point. Then $K(y^{(n)})^{\text{sep}} = K$. Moreover, $K(y^{(n)})^{\text{insep}} = K(q_1^{1/{p^n}}, \dots, q_g^{1/{p^n}})$.
\end{thm}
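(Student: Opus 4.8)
The plan is to leverage the Galois-equivariant Tate uniformisation together with the characteristic-$p$ vanishing of $p$-power roots of unity. Recall that, after the base change, we may take the periods $q_1,\dots,q_g$ to lie in $K^{\times}$, each with positive valuation, so that the isomorphism $\alpha\colon A(\overline{K})\xrightarrow{\sim}\bbG_m^g(\overline{K})/\langle q_1,\dots,q_g\rangle$ is defined over $K$ and commutes with the action of $\Gal(K^{\sep}/K)$. All of the work consists of transporting the problem through $\alpha$ and reading off the field of definition of the torsion.

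First I would pin down the $p^n$-torsion. Applying the snake lemma to multiplication by $p^n$ on $0\to\langle q_1,\dots,q_g\rangle\to\bbG_m^g(\overline{K})\to A(\overline{K})\to 0$, and using that $\bbG_m^g(\overline{K})$ has no $p$-power torsion in characteristic $p$ while it is $p^n$-divisible (as $\overline{K}$ is algebraically closed), yields a canonical isomorphism $A[p^n](\overline{K})\cong(\Z/p^n\Z)^g$ under which the standard basis vectors are represented by the classes of fixed $p^n$-th roots $q_i^{1/p^n}$. Thus a compatible system $y^{(1)},y^{(2)},\dots$ lifts, under $\alpha$, to a compatible system of $p^n$-th roots of a fixed element of $\langle q_1,\dots,q_g\rangle\subset K^{\times}$, and the field $K(y^{(n)})$ obtained by adjoining the full $p^n$-torsion is generated over $K$ by the coordinates of these roots, namely by $q_1^{1/p^n},\dots,q_g^{1/p^n}$.

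The key step, which simultaneously yields both assertions, is the following. Let $u\in\bbG_m^g(\overline{K})$ be any lift of a $p^n$-torsion point, so that $u^{p^n}\in\langle q_1,\dots,q_g\rangle\subset\bbG_m^g(K)$ is Galois-fixed. For $\sigma\in\Gal(K^{\sep}/K)$ the ratio $\sigma(u)/u$ satisfies $(\sigma(u)/u)^{p^n}=1$ and hence lies in $\mu_{p^n}(\overline{K})$; but in characteristic $p$ one has $\mu_{p^n}(\overline{K})=\{1\}$, so $\sigma(u)=u$. Therefore the separable Galois group fixes all $p$-power torsion, which gives $K(y^{(n)})^{\text{sep}}=K$. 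For the inseparable part, each coordinate of $u$ is a root of $X^{p^n}-c$ with $c\in K^{\times}$, and since $X^{p^n}-c=(X-c^{1/p^n})^{p^n}$ in $\overline{K}[X]$, this extension is purely inseparable; hence $K(y^{(n)})=K(q_1^{1/p^n},\dots,q_g^{1/p^n})$ is purely inseparable over $K$, giving $K(y^{(n)})^{\text{insep}}=K(q_1^{1/p^n},\dots,q_g^{1/p^n})$.

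I do not expect a serious obstacle: the statement is essentially forced by Tate uniformisation once one observes the disappearance of $p$-power roots of unity in characteristic $p$, which is exactly what makes the $p$-adic picture in bad reduction so different from the $\ell$-adic ($\ell\neq p$) or characteristic-zero pictures, where the same torsion points generate \emph{ramified separable} extensions via honest roots of unity. The only points requiring care are verifying that the periods are genuinely defined over $K$ after the base change, so that $u^{p^n}\in K$ and the ratio argument applies, and the routine bookkeeping identifying the field generated by a basis of $A[p^n]$ with $K(q_1^{1/p^n},\dots,q_g^{1/p^n})$.
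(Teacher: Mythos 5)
Your argument is correct and takes essentially the same route as the paper's own (very terse) proof: transport the torsion through the Galois-equivariant Tate uniformisation, identify $A[p^n]$ with classes of $p^n$-th roots of the lattice generators via the vanishing of $\mu_{p^n}$ in characteristic $p$, and conclude pure inseparability. Your version simply makes explicit the snake-lemma identification and the ratio argument that the paper leaves implicit.
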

\begin{proof}
Since $\alpha$ is a group isomorphism and Galois invariant, it is enough to look at the images of $p$-power torsion points under the map $\alpha.$ The torsion in $\bbG^g_m(\overline{K})/\langle{q_1,\dots q_g}\rangle$ is given by $p$-power roots of the generators of the lattice which give us inseparable extensions over $K$.
\end{proof}
 
\begin{cor}
Let $A$ be an ordinary abelian variety over $K$ with completely bad reduction. Then the image of the associated Galois representation is trivial.
\end{cor}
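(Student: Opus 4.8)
The plan is to read off the corollary from the theorem immediately preceding it, the only work being to translate the statement about field extensions into one about the Galois action. First I would unwind what triviality of the image means: the image of $\rho_A$ is trivial exactly when every $\sigma \in \Gal(K^{\sep}/K)$ fixes every $p$-power torsion point of $A(\overline{K})$, equivalently when the induced action on the Tate module $T_pA = \varprojlim_n A[p^n](\overline{K}) \cong \Z_p^g$ is trivial. So it suffices to produce a $\Z_p$-basis of $T_pA$ each of whose members is fixed by the whole Galois group, and then the matrix of every $\sigma$ is the identity.

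Next I would invoke the Tate uniformisation $\alpha\colon A(\overline{K}) \xrightarrow{\sim} \bbG_m^g(\overline{K})/\langle q_1,\dots,q_g\rangle$, which is a Galois-equivariant group isomorphism, so it is harmless to compute on the right-hand side. Because we are in characteristic $p$ the group $\mu_{p^n}(\overline{K})$ is trivial; hence the full $p^n$-torsion of $\bbG_m^g(\overline{K})/\langle q_1,\dots,q_g\rangle$ is generated by the classes of $q_1^{1/p^n},\dots,q_g^{1/p^n}$, with no contribution from $p$-power roots of unity. By the theorem just proved these elements generate only the purely inseparable extension $K(q_1^{1/p^n},\dots,q_g^{1/p^n})$ of $K$, i.e. $K(y^{(n)})^{\sep}=K$ for the corresponding compatible systems.

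The crux is the observation that purely inseparable elements are fixed by the absolute Galois group. Since a purely inseparable extension admits no nontrivial $K$-automorphisms, one has $\Gal(K^{\sep}/K)=\Gal(\overline{K}/K)$, and in characteristic $p$ each $q_i^{1/p^n}$ is the \emph{unique} root in $\overline{K}$ of $X^{p^n}-q_i$; being the unique root, it is fixed by every $\sigma$. Choosing the $\Z_p$-basis of $T_pA$ corresponding via $\alpha$ to the compatible systems $\bigl(q_i^{1/p^n}\bigr)_n$ for $i=1,\dots,g$, we conclude that every $\sigma$ acts as the identity, so the image of $\rho_A$ is trivial. The only point requiring care — and the main, though mild, obstacle — is the bookkeeping that $\mu_{p^n}$ vanishes in characteristic $p$, which guarantees that the $p$-power torsion is exhausted by the inseparable roots $q_i^{1/p^n}$ and that no separable contribution sneaks in through roots of unity.
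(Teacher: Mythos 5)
Your proposal is correct and follows the paper's route exactly: the corollary is read off from the preceding Tate-uniformisation theorem, since the $p$-power torsion generates only purely inseparable extensions ($K(y^{(n)})^{\sep}=K$) and hence is fixed by all of $\Gal(K^{\sep}/K)$. Your added bookkeeping that $\mu_{p^n}(\overline{K})$ is trivial in characteristic $p$, so no separable contribution enters through roots of unity, is exactly the point the paper leaves implicit.
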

\section{Raynaud's uniformization for degenerating abelian varieties}\label{raynauduniformisation}
 In higher dimensions, Raynaud proved a uniformization theorem for abelian varieties with semi-stable reduction that generalizes Tate's theorem. In this section we review Raynaud's results and use them to compute the monodromy representation in the next section. Our main references are \cite{raynaud} and \cite{raynaudgen}.

 Let $A$ be an abelian surface over $K$ with semi-stable reduction over $K.$ Then its reduction $A_0$ over $k$ is an extension of a torus $T_0$ by an abelian variety $B_0$: 
\begin{equation}\label{eq1}
0\rightarrow T_0\rightarrow A_0\rightarrow B_0\rightarrow 0
\end{equation}
 
 Raynaud's results in \cite{raynaud} construct a $p$-adic uniformizing space as follows: we complete the identity component of the N\'eron model of $A$ along the special fibre to get a formal group which we denote by $\widehat{A}.$  The torus $T_0$ also lifts to a formal torus $\widehat{T}$ so that $\widehat{A}$ is an extension of a formal torus by a formal abelian scheme over $R$. We define $\widehat {B}$ to be the quotient. We therefore get the following exact sequence:
\begin{equation}\label{eq2}
0\rightarrow \widehat{T}\rightarrow  \widehat{A}\rightarrow \widehat{B}\rightarrow 0.
\end{equation}

The Raynaud generic fibre of these formal groups have a structure of a rigid analytic space, giving us an exact sequence of rigid analytic spaces:
 
\begin{equation}\label{eq3}
 0\rightarrow \widehat{T}_{\rig}\rightarrow \widehat{A}_{\rig}\rightarrow \widehat{B}_{\rig} \rightarrow 0
 \end{equation}
 
 We note that $\widehat{A}_{\rig}\subset A^{\an}$ is an open analytic subgroup of the analytification of $A$.  It is equal to $A^{\an}$ if and only if $A$ has good reduction. In that case $T=0.$ By construction, $\widehat{T}_{\rig}$ is the ``group of units" inside a full analytical torus. We can further extend these maps to get an extension by a full analytical affine  torus that is $ T_{\rig}\simeq (\mathbb{G}_m^{\an})^d$ (see \cite{raynaudgen}, Section 1). We therefore get a rigid analytic extension $Z$ which will be the parametrizing space for $A^{\an}:$  

\begin{equation}\label{eq4}
0\rightarrow T_{\rig}\rightarrow Z\rightarrow B_{\rig} \rightarrow 0 
\end{equation}
The parametrisation is recorded in the following theorem. See \cite{raynaudgen} for further details. We mainly write the statements that will be useful for us. 

\begin{thm}(See Theorem 1.2 in \cite{raynaudgen})
 Keeping the notation above, we have the following:
\begin{enumerate}
    \item The closed immersion $\widehat{T}_{\rig}\hookrightarrow \widehat{A}_{\rig}$ extends uniquely to a rigid analytic group map $\hat p:T_{\rig}\rightarrow A^{\an}$
    \item The open immersion $\widehat{A}_{\rig}\hookrightarrow A^{\an}$ extends uniquely to a surjective rigid analytic group morphism $p: Z\rightarrow A^{\an}$
    \item The kernel $M$ of the map $p$, is a lattice in $Z$
 whose rank equals the dimension of the torus $T_{\rig}$. The rigid analytic morphism $Z/M\rightarrow A$ that we obtain from $p$ is an isomorphism.
   \end{enumerate}
\end{thm}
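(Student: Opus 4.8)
The plan is to recognise this as the rigid-analytic uniformization theorem of Raynaud (in the precise form of Bosch--L\"utkebohmert), and to organise the argument so that the three assertions are deduced in the order $(1)\Rightarrow(2)\Rightarrow(3)$, with the genuine analytic input isolated in $(1)$. First I would record the two facts already produced above: the exact sequence $0\to\widehat{T}_{\rig}\to\widehat{A}_{\rig}\to\widehat{B}_{\rig}\to0$ of rigid groups, together with the observation that $\widehat{B}$ is a \emph{proper} formal scheme, so its generic fibre is the whole analytification, $\widehat{B}_{\rig}=B_{\rig}=B^{\an}$. All the uniqueness clauses are then cheap: $\widehat{T}_{\rig}$ is a nonempty admissible open of the connected space $T_{\rig}\simeq(\bbG_m^{\an})^d$, and $\widehat{A}_{\rig}$ a nonempty admissible open of the connected space $A^{\an}$, so by the identity principle any two analytic homomorphisms agreeing on $\widehat{T}_{\rig}$ (resp.\ on $\widehat{A}_{\rig}$) coincide.

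Granting the extension $\hat p\colon T_{\rig}\to A^{\an}$ of part $(1)$, I would construct $Z$ and the map $p$ of part $(2)$ by a push-out. Form
$$Z \;=\; \widehat{A}_{\rig}\times^{\widehat{T}_{\rig}} T_{\rig}\;=\;\bigl(\widehat{A}_{\rig}\times T_{\rig}\bigr)\big/\{(\iota(u),u^{-1}):u\in\widehat{T}_{\rig}\},$$
where $\iota\colon\widehat{T}_{\rig}\hookrightarrow\widehat{A}_{\rig}$ is the closed immersion. Pushing the sequence for $\widehat{A}_{\rig}$ out along $\widehat{T}_{\rig}\hookrightarrow T_{\rig}$ yields exactly the extension $0\to T_{\rig}\to Z\to B_{\rig}\to0$ of (\ref{eq4}). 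The open immersion $\widehat{A}_{\rig}\hookrightarrow A^{\an}$ and the homomorphism $\hat p$ agree on $\widehat{T}_{\rig}$ (both restrict there to $\iota$ followed by the open immersion), so by the universal property of the push-out they glue to a single analytic homomorphism $p\colon Z\to A^{\an}$ extending the open immersion; uniqueness is the identity principle above. Restricting $p$ to $T_{\rig}$ recovers $\hat p$, which is the compatibility asserted in $(1)$.

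For part $(3)$ I would argue group-theoretically. Since $p$ has open image (it contains the open subgroup $\widehat{A}_{\rig}$) and $A^{\an}$ is connected, $p$ is surjective. Inside $Z$ one has $\widehat{A}_{\rig}\cap T_{\rig}=\widehat{T}_{\rig}$ and $\widehat{A}_{\rig}\cdot T_{\rig}=Z$, the latter because $\widehat{A}_{\rig}\to B_{\rig}$ is already surjective; hence $Z/\widehat{A}_{\rig}\cong T_{\rig}/\widehat{T}_{\rig}\cong\Z^d$ is discrete. The kernel $M=\ker p$ meets $\widehat{A}_{\rig}$ trivially, because $\widehat{A}_{\rig}\hookrightarrow A^{\an}$ is injective; therefore $M$ injects into the discrete group $Z/\widehat{A}_{\rig}\cong\Z^d$ and is a lattice. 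That its rank is exactly $d=\dim T$ I would obtain from properness: $A^{\an}$ and $B_{\rig}=B^{\an}$ are compact, so the induced quotient of the torus factor $T_{\rig}$ by the image of $M$ must be compact, which forces $M$ to have full rank $d$. Finally $Z/M\to A^{\an}$ is a bijective homomorphism of rigid groups that is a local isomorphism near the identity (there $p$ is the open immersion $\widehat{A}_{\rig}\hookrightarrow A^{\an}$), hence a local isomorphism everywhere by translation, and a bijective local isomorphism of rigid groups is an isomorphism.

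The main obstacle is the \emph{existence} half of part $(1)$: producing the analytic homomorphism $\hat p\colon T_{\rig}\to A^{\an}$ extending the inclusion of the unit torus $\widehat{T}_{\rig}$. Uniqueness and the formal-group bookkeeping are formal, but existence is the true uniformization statement and cannot be had by pure algebra, since there is no canonical image in $A^{\an}$ for the lattice directions $t^{e_i}\in T_{\rig}$. This is precisely where Tate's explicit construction for elliptic curves (the case $d=1$, $B=0$, in which $\hat p$ is the quotient map $\bbG_m^{\an}\to\bbG_m^{\an}/q^{\Z}$) must be generalised: one builds $\hat p$ from the formal semi-abelian model by the theta-function and formal-model machinery of \cite{raynaudgen}, and I would import this construction rather than reprove it.
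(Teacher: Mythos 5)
The paper does not prove this statement: it is quoted verbatim from Theorem~1.2 of \cite{raynaudgen} (Bosch--L\"utkebohmert's rigid-analytic form of Raynaud's uniformization), with the reader referred there for details, so there is no internal proof to compare yours against. On its own terms your outline is a faithful reconstruction of the standard argument from that reference: uniqueness by the identity principle on the connected groups $T_{\rig}$ and $A^{\an}$; $Z$ as the pushout of $0\to\widehat{T}_{\rig}\to\widehat{A}_{\rig}\to\widehat{B}_{\rig}\to 0$ along $\widehat{T}_{\rig}\hookrightarrow T_{\rig}$ (using $\widehat{B}_{\rig}=B^{\an}$ by properness of the formal abelian part); $p$ by the universal property; and the lattice property of $M$ from $M\cap\widehat{A}_{\rig}=0$ together with discreteness of $Z/\widehat{A}_{\rig}\cong T_{\rig}/\widehat{T}_{\rig}$, with full rank forced by quasi-compactness of $A^{\an}$. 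You are also right to flag that the entire analytic content sits in the existence half of (1), and since you defer exactly that to \cite{raynaudgen}, your proposal is no less self-contained than the paper itself. Two small imprecisions worth tightening if you were to write this out: $T_{\rig}/\widehat{T}_{\rig}$ is identified with (a power of) the value group via the valuation map, not literally $\Z^d$ on $\overline{K}$-points, though $M$ is still discrete and free of rank $\le d$; and the steps ``open subgroup is closed'' and ``$A^{\an}/\widehat{A}_{\rig}$ is finite'' require the translates of $\widehat{A}_{\rig}$ to form an admissible covering, which is where quasi-compactness of $A^{\an}$ genuinely enters. Neither point is a gap in the intended argument.
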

The exact sequence in \ref{eq4} is algebraisable (see \cite{raynaudgen}, Section 1). We will  drop the subscript $``\rig"$ in \ref{eq4} to refer to their algebraic counterpart. 
\section{Monodromy theorem for abelian surfaces with semi-abelian reduction}\label{6}
From this section on wards, we work in dimension two and fix $A$ to be a simple abelian surface over $K$ with semi-stable reduction. Indeed, it is enough to assume that $A$ is simple. For if $A$ is isogenous to a product of elliptic curves, then at least one of them has bad reduction, reducing us to the case of dimension one. Further, we denote by $Z$ its Raynaud extension. Then the abelian variety $B$ in the exact sequence \ref{eq4} is an elliptic curve and $T$ is one dimensional.  We first record some results about the structure of $p$-power torsion of $A$ using Raynaud's uniformisation  result. 
\subsection{Galois action on the $p$-power torsion of $A$} Let $y^{(n)}$ and $z^{(n)}$ denote the bases of $p^n$-torsion of $A(\overline{K})$. Note that for all $n\geq 1$, $y^{(n)}$  generates the $p^n$-torsion of $Z$ and $z^{(n)}$ is such that $p^n z^{(n)} = \lambda$ where $\lambda$ is the generator of $M$.

\begin{lem}
Let $\varphi: Z\rightarrow B$ denote the extension map between the parametrizing space and the elliptic curve. Then $B$ is an ordinary elliptic curve and $\varphi$ maps $y^{(n)}$ and $z^{(n)}$ injectively onto $B(\overline{K})$ for all $n\geq 0.$ Moreover, $\varphi(y^{(n)})$ maps onto $B[p^n](\overline{K})$, while $p^n\varphi(z^{(n)}) =\varphi(\lambda)$.
\end{lem}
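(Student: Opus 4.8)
The plan is to push the entire $p$-power torsion structure of $A$ into the elliptic curve $B$ through $\varphi$, exploiting two features of characteristic $p$: the torus $T\simeq \bbG_m$ carries no nontrivial $p$-power torsion over $\overline{K}$, since $\mu_{p^n}(\overline{K})=\{1\}$, and the lattice $M=\langle\lambda\rangle$ is free of rank one.

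First I would pin down $Z[p^n](\overline{K})$. Applying the snake lemma for multiplication by $p^n$ to the uniformisation sequence $0\to M\to Z(\overline{K})\to A(\overline{K})\to 0$, and using $M[p^n]=0$ together with $M/p^nM\cong \Z/p^n\Z$, produces an exact sequence $0\to Z[p^n](\overline{K})\to A[p^n](\overline{K})\xrightarrow{\partial} M/p^nM$. The connecting map $\partial$ sends the class of $z^{(n)}$ to $p^nz^{(n)}=\lambda \bmod p^nM$, a generator, so $\partial$ is onto; since $A[p^n](\overline{K})=(\Z/p^n\Z)^2$, order-counting forces $Z[p^n](\overline{K})\cong \Z/p^n\Z$, generated by $y^{(n)}$. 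This records precisely that $y^{(n)}$ generates the $p^n$-torsion of $Z$ while $z^{(n)}$ accounts for the remaining cyclic factor.

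Next, from $0\to T\to Z\xrightarrow{\varphi} B\to 0$ and $T[p^n](\overline{K})=\mu_{p^n}(\overline{K})=0$, the map $\varphi$ is injective on $Z[p^n](\overline{K})$, hence embeds $\langle y^{(n)}\rangle\cong \Z/p^n\Z$ into $B[p^n](\overline{K})$. Because $B$ is an elliptic curve in characteristic $p$, the group $B[p^n](\overline{K})$ is cyclic of order at most $p^n$; an embedding of a cyclic group of order $p^n$ therefore forces $B[p^n](\overline{K})=\Z/p^n\Z$ with $\varphi(y^{(n)})$ a generator. In particular $B$ is ordinary, which gives the first assertion and the statement that $\varphi(y^{(n)})$ maps onto $B[p^n](\overline{K})$. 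The relation $p^n\varphi(z^{(n)})=\varphi(\lambda)$ is then immediate on applying $\varphi$ to $p^nz^{(n)}=\lambda$.

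It remains to check that $\varphi$ is injective on the subgroup generated by $y^{(n)}$ and $z^{(n)}$, and here I would invoke that $A$ is simple. The key point is that no nonzero multiple of $\lambda$ lies in $T$: if $m\lambda\in T(\overline{K})$ for some $m\geq 1$, then $Z/\langle m\lambda\rangle$ is an extension of $B$ by the Tate curve $\bbG_m/\langle m\lambda\rangle$, hence isogenous to a product of two elliptic curves, and so is its isogenous quotient $A=Z/M$, contradicting simplicity. Thus $\varphi(\lambda)$ has infinite order, so $\varphi(z^{(n)})$ has infinite order, and separating the torsion contribution $a\varphi(y^{(n)})$ from the infinite-order contribution $b\varphi(z^{(n)})$ shows that $\ker(\varphi)\cap\langle y^{(n)},z^{(n)}\rangle=0$. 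I expect this transversality of the lattice to the torus, i.e.\ that $\varphi(\lambda)$ is non-torsion, to be the only genuinely nonformal step; everything else reduces to the snake lemma and the vanishing of $\mu_{p^n}(\overline{K})$ in characteristic $p$.
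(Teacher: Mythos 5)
Your proof is correct and follows essentially the same route as the paper's: the two key inputs in both are that $\bbG_m$ has no $p$-power torsion over $\overline{K}$ in characteristic $p$ (forcing $Z[p^n]$ to inject into $B$, hence $B$ ordinary) and that simplicity of $A$ forbids $\lambda$ from landing in the torus. Your version is somewhat more thorough — the snake-lemma identification of $Z[p^n](\overline{K})\cong\Z/p^n\Z$ and the upgrade from ``$\varphi(\lambda)\neq e$'' to ``$\varphi(\lambda)$ is non-torsion,'' which gives honest injectivity on the subgroup $\langle y^{(n)},z^{(n)}\rangle$ — but these are refinements of the paper's argument, not a different one.
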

\begin{proof}
The elliptic curve $B$ has to be ordinary, otherwise the $p$-power torsion of $Z$ lies in the kernel of $\varphi.$ However, since $T$ does not have any $p$-power torsion over $\overline{K}$, this cannot happen. The map is an injection on $y^{(n)}\in Z[p^n](\overline{K})$ because of the same reason. While if  $\varphi(z^{(n)}) = e$, then $p\varphi(z^{(n)}) = \varphi(\lambda) = e$. But then $\langle{\lambda}\rangle \subset T$ implies that $T/M\hookrightarrow A$, contradicting that $A$ is simple. Finally, the map $\varphi$ is a group homomorphism and hence the assertion about the images of $y^{(n)}$ and $z^{(n)}$ is true for all $n\geq 1.$
\end{proof}
\begin{lem}
Let $\sigma\in \Gal(K(y^{(n)}, z^{(n)})/K).$ Then $\sigma$ preserves the $p$-power torsion $Z[p^n](\overline{K})$, while the action on $z^{(n)}$ for all $n\geq 0$ is such that $\sigma(z^{(n)}) = z^{(n)}+ ky^{(n)}$ for some $1\leq k\leq p-1$.
\end{lem}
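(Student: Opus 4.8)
The plan is to deduce both assertions from a single structural fact recalled in Section \ref{raynauduniformisation}: Raynaud's uniformisation map $p\colon Z\to A^{\an}$ is defined over $K$, so it induces a $\Gal(K^{\sep}/K)$-equivariant isomorphism $A(\overline K)\cong Z(\overline K)/M$, and the period lattice $M=\langle\lambda\rangle$ is $K$-rational. Granting these two inputs, the lemma becomes a formal chase on one short exact sequence of Galois modules.

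First I would set up that sequence. Since $M$ is torsion-free while $Z[p^n](\overline K)$ is $p$-power torsion, we have $M\cap Z[p^n](\overline K)=0$, so $Z[p^n](\overline K)=\langle y^{(n)}\rangle$ injects into $A[p^n](\overline K)$, and I identify it with its image. Sending a class $w\bmod M$ to $p^n w\in M$ then defines a homomorphism
$$\psi\colon A[p^n](\overline K)\longrightarrow M/p^nM,$$
which is well defined (altering the representative $w$ by an element of $M$ changes $p^nw$ by an element of $p^nM$) and surjective, since $\psi(z^{(n)})=p^nz^{(n)}=\lambda$ generates $M/p^nM\cong\Z/p^n\Z$. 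A direct computation gives $\ker\psi=\langle y^{(n)}\rangle$, yielding
$$0\longrightarrow \langle y^{(n)}\rangle \longrightarrow A[p^n](\overline K)\xrightarrow{\ \psi\ } M/p^nM\longrightarrow 0.$$
Because $p$ is defined over $K$, every arrow here is $\Gal(K^{\sep}/K)$-equivariant.

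The two claims then drop out. Galois-equivariance of $\psi$ forces $\sigma$ to preserve $\ker\psi=Z[p^n](\overline K)$, which is the first assertion. For the second, $K$-rationality of $\lambda$ gives $\psi(\sigma z^{(n)})=\sigma(\psi z^{(n)})=\sigma(\lambda)=\lambda=\psi(z^{(n)})$, so $\sigma z^{(n)}-z^{(n)}\in\ker\psi=\langle y^{(n)}\rangle$ and hence $\sigma(z^{(n)})=z^{(n)}+k\,y^{(n)}$ for a coefficient $k\in\Z/p^n\Z$. Writing $\sigma(z^{(n)})=k\,y^{(n)}+c\,z^{(n)}$ in the basis $(y^{(n)},z^{(n)})$ and applying $\psi$ shows $c\lambda=\lambda$ in $M/p^nM$, so the coefficient on $z^{(n)}$ is forced to equal $1$. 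Combined with the previous lemma this exhibits $\rho_A(\sigma)$ as the upper-triangular matrix $\left(\begin{smallmatrix} a & k\\ 0 & 1\end{smallmatrix}\right)$, where $\sigma(y^{(n)})=a\,y^{(n)}$.

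The hard part will not be the diagram chase but the two structural inputs I have been assuming: that the uniformisation and the maps $T\to Z\to B$ are genuinely defined over $K$ (so that $\psi$ is Galois-equivariant), and that $M$ is pointwise Galois-fixed, i.e. $\lambda\in Z(K)$ — the exact analogue of Tate's $q\in K^\times$ in the totally degenerate case of Section \ref{badred}. Both are part of Raynaud's theory, and the point of the lemma is precisely that all the interesting ramification is confined to the scalar $a$ governing $\sigma(y^{(n)})$ (equivalently, to the monodromy of the ordinary elliptic curve $B$ via $\varphi$), while the $z^{(n)}$-direction can contribute only the unipotent shear by $k$.
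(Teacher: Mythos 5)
Your proof is correct and is essentially the paper's own argument: the paper's entire proof is the one-line computation $p^n\bigl(\sigma(z^{(n)})-z^{(n)}\bigr)=\sigma(\lambda)-\lambda=0$, hence the difference lies in $Z[p^n](\overline K)=\langle y^{(n)}\rangle$, together with the (implicit) Galois-stability of $Z[p^n]$ coming from the $K$-rationality of the uniformisation. Your short exact sequence $0\to\langle y^{(n)}\rangle\to A[p^n](\overline K)\to M/p^nM\to 0$ is just a tidier packaging of that same computation, with the same two structural inputs (Galois-equivariance of $p\colon Z\to A$ and $\lambda\in Z(K)$) that the paper also takes from Raynaud's theory without comment.
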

\begin{proof}
The Galois group preserves the $p$-power torsion of $Z[p^n](\overline{K}).$ While note that $p^n\cdot(\sigma(z^{(n)}) - z^{(n)}) = \lambda -\lambda = 0.$ Hence $\sigma(z^{(n)}) - z^{(n)}\in Z[p^n](\overline{K}).$ 
\end{proof}

 Before we prove  Theorem \ref{thm1}, we summarize Raynaud's results and the above lemmas. We get the following diagram of rigid analytic spaces such that:

$$\xymatrix{ &M\ar[d]&\\\bbG_m \ar[r] & Z \ar[r] \ar[d]&B& \\
& A&  }$$

\begin{enumerate}
\item $B$ is an ordinary elliptic curve.
\item $M$ is a lattice in $Z$ of rank 1
\item It is enough to understand the Galois action of the images of the  points in $Z$ under the map $Z\rightarrow B$ that parametrise $p$-power torsion of $A$.
\item The action of an element $\sigma$ of the Galois group $K^{\sep}/K$ on the bases elements $y^{(n)}$ and $z^{(n)}$ is represented by the following matrix in $\GL_2(\Z/p^n\Z)$:
$$\sigma = \begin{pmatrix}
*&*\\
0&1

\end{pmatrix}
$$

\end{enumerate}
\subsection{Ramification.} Now, since it is enough to understand the images in $B$, the Galois action on the $p$-power torsion points depends on the reduction type of $B$. That is, it depends on whether the abelian quotient $B_0$ in \ref{eq1} is ordinary or supersingular.

\subsubsection{The case when $B_0$ is ordinary.} By abuse of notation we write the images of the bases of $p$-power we torsion in $B$ as $y^{(n)}$ and $z^{(n)}$ as well. Since $B$ has ordinary reduction, the field extension generated by $y^{(n)}$ is unramified as the reduction map $\mathcal{B}(\overline{R})\rightarrow B_0(\overline{\F_q})$ is injective on the $p^n$-torsion of $\mathcal{B}$ where $\mathcal{B}$ is the N\'eron model of $B$. Moreover, it follows from Lemma \ref{totram} below that once we attach the $z^{(n)}$ to $K(y^{(n)})$, the field extension $K(z^{(n)}, y^{(n)})^{\sep}$ over  $K(y^{(n)})$ must be totally ramified.  This proves that the inertia subgroup fixes the bases element $y^{(n)}$ for $n\geq 1$ and hence its action is unipotent.

\subsubsection{The case when $B_0$ is supersingular.} In this case, we get a result  analogous to Igusa's theorem.
\begin{prop}\label{semiabelian}
Let $A$ be a simple abelian surface over $K$ with semi-abelian reduction such that the abelian variety $B_0$ in exact sequence \ref{eq1} is a super-singular elliptic curve. Let $Z$ be its uniformizing space and $M = \langle{\lambda\rangle}$ be the lattice in $M$ such that $A\simeq Z/M$.  Consider a sequence of points in $G(\overline{K})$: $$y^{(1)},y^{(1)},y^{(2)}, \dots \text{ and } z^{(1)}, z^{(1)}, z^{(2)}\dots$$
such that $py^{(1)}=e$ and $py^{(n+1)}=y^{(n)}$, $pz^{(1)}=\lambda$ and $pz^{(n+1)}=z^{(n)}$ for $n\geq 1.$ Then there exists some $n_0$ such that for all $n>n_0$, the field extension $K(y^{(n)}, z^{(n)})/K(y^{(n_0)}, z^{(n_0)})$ is a totally ramified extension.
\end{prop}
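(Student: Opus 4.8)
The plan is to push everything down to the elliptic curve $B$ and then combine Igusa's theorem (Theorem \ref{ss}) in the ``torsion direction'' with a division-point analysis in the ``lattice direction''. By item (3) of the summary preceding the proposition, it suffices to control the Galois action on the images $\varphi(y^{(n)})$ and $\varphi(z^{(n)})$ in $B(\overline K)$, which I will again denote $y^{(n)},z^{(n)}$. Here $B$ is an elliptic curve over $K$ with \emph{supersingular} reduction $B_0$, the $y^{(n)}$ form a compatible system of $p^n$-torsion points of $B$, and the $z^{(n)}$ form a compatible system of $p^n$-division points of the point $\lambda_B:=\varphi(\lambda)\in B(K)$. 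This last point is rational because the rank-one Galois-stable lattice $M=\langle\lambda\rangle$ is preserved by $\Gal(K^{\sep}/K)$, so $\sigma\lambda=\pm\lambda$; the resulting sign is at worst a quadratic character, whose fixed field is a bounded extension that I will absorb into the base. It is moreover nonzero, since $\varphi(\lambda)=0$ would force $\langle\lambda\rangle\subset T$ and hence $T/M\hookrightarrow A$, contradicting simplicity of $A$, exactly as in the lemma above. Throughout, ``totally ramified'' is understood for the separable part of the extensions, in keeping with the formulation of Theorem \ref{ss}; the purely inseparable contributions are tracked by the same Newton-polygon bookkeeping as in Katz's proof.

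First I would dispose of the torsion direction. Since $B_0$ is supersingular, the $y^{(n)}$ lie in the formal group $\widehat B$ and Theorem \ref{ss} applies to $B$ verbatim: there is an $n_1$ so that $K(y^{(n)})^{\sep}/K(y^{(n_1)})$ is totally ramified for all $n>n_1$. Because $B_0(\overline{\F_q})$ has no nonzero $p$-power torsion, every $y^{(n)}$ reduces to $0$, so the torsion direction contributes nothing to the residue field.

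Second — and this is the heart of the argument — I would analyze the relative extension $K(y^{(n)},z^{(n)})/K(y^{(n)})$. Over $K(y^{(n)})$ the full $p^n$-torsion $B[p^n]$ is already available, so any two $p^n$-division points of $\lambda_B$ differ by a torsion point in the base; hence this extension is generated by the single point $z^{(n)}$, a Kummer-type $B[p^n]$-torsor. To compute its ramification I would pass to the formal group, using the crucial feature of supersingularity: since $B_0(\overline{\F_q})$ has no $p$-power torsion, multiplication by $p^n$ is injective, and (the group being divisible) bijective, on $B_0(\overline{\F_q})$. Thus $\bar\lambda_B\in B_0(k)$ has a \emph{unique} $p^n$-division point in $B_0(\overline{\F_q})$, which by uniqueness is Galois-fixed and therefore already defined over the residue field $k$. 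Consequently every $\bar z^{(n)}$ lies in $B_0(k)$, so adjoining $z^{(n)}$ does not enlarge the residue field. Translating $z^{(n)}$ by a lift $\tilde P_n\in\mathcal B(R)$ of its reduction, the difference $w^{(n)}=z^{(n)}-\tilde P_n$ lies in $\widehat B(\mathscr M)$ and satisfies an inhomogeneous equation $[p^n](w^{(n)})=\lambda_B-p^n\tilde P_n$ whose right-hand side lies in the formal group and has positive valuation. Running Katz's Newton-polygon argument on this equation, exactly as in Theorem \ref{ss} but now with a nonzero inhomogeneous term, shows that the separable part of $K(w^{(n)})/K(y^{(n)})$, and hence of $K(y^{(n)},z^{(n)})/K(y^{(n)})$, is totally ramified.

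Finally I would combine the two steps. Taking $n_0=n_1$, for $n>n_0$ the extension $K(y^{(n)},z^{(n)})$ is totally ramified over $K(y^{(n)})$, which is totally ramified over $K(y^{(n_0)})$; since no stage enlarges the residue field beyond the bounded contributions already absorbed into $K(y^{(n_0)},z^{(n_0)})$, the tower $K(y^{(n)},z^{(n)})/K(y^{(n_0)},z^{(n_0)})$ is totally ramified, as claimed. I expect the main obstacle to be precisely the division direction: one must simultaneously carry out the inhomogeneous Newton-polygon estimate and verify that the reduction $\bar\lambda_B$ produces no unbounded unramified extension. The supersingularity of $B_0$, through the uniqueness of $p$-power division points in $B_0(\overline{\F_q})$, is exactly what secures this control, and is the feature with no analogue in the ordinary case.
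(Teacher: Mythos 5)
Your first half coincides with the paper's: push everything to $B$ via $\varphi$, and handle the torsion direction $y^{(n)}$ by quoting Theorem \ref{ss} for the supersingular elliptic curve $B$. Where you diverge is the division direction. The paper disposes of $K(z^{(n)},y^{(n)})/K(y^{(n)})$ by a soft Galois-theoretic argument (Lemma \ref{totram}): the extension is cyclic of $p$-power order, so any unramified subextension would force $K(z^{(1)},y^{(n)})$ to sit inside an abelian extension of $K$, contradicting the non-abelian shape $\left(\begin{smallmatrix}*&*\\0&1\end{smallmatrix}\right)$ of the Galois action; no formal-group computation is needed and the supersingularity of $B_0$ is not used again at this stage. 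You instead propose a hard computation: reduce $z^{(n)}$ modulo the maximal ideal, use uniqueness of $p^n$-division points in $B_0(\overline{\F_q})$ to translate into the formal group, and run an inhomogeneous Newton-polygon analysis. If carried out, your route would yield strictly more (explicit valuations of the $z^{(n)}$, hence ramification breaks), whereas the paper's route is shorter and avoids the bookkeeping entirely.

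However, as written your key step is asserted rather than proved, and two points need repair. First, the sentence ``every $\bar z^{(n)}$ lies in $B_0(k)$, so adjoining $z^{(n)}$ does not enlarge the residue field'' is a non sequitur: the residue field of $K(y^{(n)},z^{(n)})$ is not controlled by where the reductions of chosen generators land (e.g.\ a generator of the form $u+t\beta$ with $\beta$ generating an unramified extension reduces into $k$). The rationality of $\bar z^{(n)}$ only tells you that the obstruction to total ramification, if any, lives in the formal group; the conclusion about the residue field is exactly what the subsequent Newton-polygon step must establish, so the claim is circular where it appears. Second, the ``inhomogeneous'' Newton-polygon argument is not verbatim Katz's: you must show the inhomogeneous term $c_n=\lambda_B-[p^n]\tilde P_n\in\widehat{B}(\mathscr{M})$ is nonzero (if $\lambda_B$ is $p^n$-divisible in $\mathcal{B}(R^{\un})$ for all $n$ the extension collapses and must be absorbed into $n_0$), that the lifts $\tilde P_n$ can be chosen compatibly so that the recursion $[p]w^{(n+1)}=w^{(n)}+\epsilon_n$ has error terms defined over an unramified base, and that $v(c_n)$ eventually sits below the vertex $(1,v(A_1))$ so that the polygon is a single segment of slope with denominator $p^n$. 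These are all plausibly true and in the spirit of Section \ref{section3}, but they constitute the actual content of the step you flag as the heart of the argument; the paper's Lemma \ref{totram} sidesteps all of them.
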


\begin{proof}
As in the previous case, we first consider (the image of) $y^{(n)}$ in $B$ for $n\geq 1$. Since $B$ has super-singular reduction, by Theorem \ref{ss} we know that there exists some $n_0$ such that  $K(y^{(n)})/K(y^{(n_0)})$ is totally ramified for all $n>n_0.$ Next, consider (the image of) $z^{(n)}$ for $n\geq 1$. We show that $K(z^{(n)},y^{(n)})/K(y^{(n)})$ is a totally ramified for all $n >n_0$ as a separate lemma below.
\end{proof}
\begin{lem}\label{totram}
Let $B$ be an ordinary elliptic curve over $K$ with supersingular reduction. Let $\lambda$ be any $K$-point of $B.$ Take a sequence of points in $B(\overline{K})$ $$z^{(1)},z^{(2)},z^{(3)}\dots$$ such that $pz^{(1)} =\lambda$ and $p z^{(n)} = z^{(n+1)}$. Let $y^{(n)}$ be the generator of the $p^n$-power torsion points of $B$. Then there exists some $n_0$ such that field extension $K(z^{(n)},y^{(n)})/K(y^{(n)})$ is totally ramified for all $n> n_0.$
\end{lem}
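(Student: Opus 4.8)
The plan is to reduce the statement to a question about ramification inside the formal group of $B$ and then run a Newton polygon analysis in the spirit of Katz's proof of Theorem \ref{ss}. First I would record the structural features of the extension. Since $B/K$ is ordinary, $B[p^n](\overline{K})$ is cyclic, generated by $y^{(n)}$, so every $p^n$-division point of $\lambda$ has the form $z^{(n)} + c\,y^{(n)}$ with $c\in\Z/p^n\Z$; hence $K(z^{(n)},y^{(n)}) = K(y^{(n)})(z^{(n)})$, and $\Gal(K(y^{(n)})(z^{(n)})/K(y^{(n)}))$ embeds into $\langle y^{(n)}\rangle$ via $\sigma\mapsto \sigma(z^{(n)})-z^{(n)}$ (this difference is $p^n$-torsion because $p^n(\sigma z^{(n)}-z^{(n)}) = \sigma\lambda-\lambda=0$). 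Writing $L_n = K(y^{(n)})$, the extension $L_n(z^{(n)})/L_n$ is therefore abelian of $p$-power degree, and proving that it is totally ramified is equivalent to showing that its residue field does not grow.

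A key elementary input is that the reductions $\bar z^{(n)}$ are \emph{uniformly} rational. Because $B_0$ is supersingular, $B_0[p^\infty](\overline{\F_q})=0$, so multiplication by $p^n$ is a bijection on $B_0(\overline{\F_q})$; as $\lambda$ reduces to $\bar\lambda\in B_0(\F_q)$, the unique point $\bar z^{(n)}$ with $p^n\bar z^{(n)}=\bar\lambda$ is fixed by Frobenius and hence lies in $B_0(\F_q)$ for every $n$. Using that $B$ has good (supersingular) reduction, I can lift $\bar z^{(n)}$ to a point $\tilde z_n\in B(K)$ and replace $z^{(n)}$ by $u^{(n)} := z^{(n)}-\tilde z_n$, which reduces to $0$ and hence lies in the formal group $\widehat{B}(\mathscr{M})$; this does not change the field, since $\tilde z_n\in B(K)\subseteq L_n$. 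Thus the problem becomes: for $u^{(n)}\in\widehat{B}(\mathscr{M})$ satisfying $[p^n](u^{(n)})=w_n$ with $w_n\in\widehat{B}(\mathfrak{m}_R)$, show that $L_n(u^{(n)})/L_n$ is totally ramified for large $n$.

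At this point I would invoke the Weierstrass/Newton polygon machinery of Theorem \ref{ss}. Over $R$ the multiplication-by-$p$ map has the form $[p](X) = A_1(t)X^p + A_2(t)X^{p^2}+\cdots$ with $v_K(A_1)\geq 1$ and $v_K(A_2)=0$, reflecting that the special fibre has height two while the generic fibre is ordinary. Solving $[p](u^{(n)}) = (\text{previous term})$ step by step and plotting the successive Newton polygons, exactly as in Katz's argument, gives the valuations of the $u^{(n)}$ and shows they are uniformizers of totally ramified extensions. As in the case $v_K(A_1)=m>1$ of Theorem \ref{ss}, the contribution of the inhomogeneous target on the right-hand side has valuation that is multiplied by $p$ at each stage, so after finitely many steps it no longer affects the lower convex hull; the slope then stabilizes at the value forced by $A_1,A_2$ alone, which is precisely the slope giving total ramification. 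Choosing $n_0$ past this stabilization yields the statement.

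The main obstacle is the presence of the non-torsion target $\lambda$: unlike in Igusa's theorem, where the relevant equations are homogeneous ($[p](y^{(1)})=e$), here the right-hand sides $w_n$ are genuine elements of the formal group whose valuations I must control uniformly in $n$. I expect the real care to go into (i) checking that the reduction-into-$B_0(\F_q)$ step is compatible with the Newton polygon bookkeeping, so that translating by $\tilde z_n$ does not reintroduce residue-field growth, and (ii) verifying that the inhomogeneous valuations are eventually dominated, so that the slopes stabilize and $L_n(z^{(n)})/L_n$ becomes totally ramified for all $n>n_0$. A subtlety worth flagging is that the mere rationality of $\bar z^{(n)}$ does not by itself rule out residue growth; one genuinely needs the quantitative formal-group computation to conclude.
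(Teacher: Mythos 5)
Your reduction to the formal group is sound (the observation that $\bar z^{(n)}\in B_0(\F_q)$ because $[p^n]$ is bijective on $B_0(\overline{\F_q})$, and the translation by a lift $\tilde z_n\in B(K)$, are both correct), but the Newton polygon step does not prove the statement that is actually being claimed, and this is a genuine gap rather than a detail to be checked. The polygon of $[p^n](X)-w$ gives you the valuations $v_K(u^{(n)})$, and from those one can indeed conclude that $K(z^{(n)})/K$ is totally ramified (the denominator of $v_K(u^{(n)})$ matches the degree bound, exactly as in Theorem \ref{ss}). But the lemma is a \emph{relative} statement over the base $L_n=K(y^{(n)})$, which is itself totally ramified over $K$ with value group roughly $\frac{1}{(p-1)p^{2n-2}}\Z$. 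Renormalizing, $v_{L_n}(u^{(k)})$ is an integer divisible by a large power of $p$ for all $k$ well below $n$, so the slope data certifies only a bounded ramification index (about $p^2$, coming from $\mathrm{lcm}$ of the two value groups) for $K(z^{(n)},y^{(n)})/K(y^{(n)})$, while the degree of that extension can be as large as $p^n$. Two towers that are each totally ramified over $K$ can perfectly well have a compositum with residue field growth over one of them, and nothing in the outlined computation excludes this. Your own closing caveat gestures at the right worry, but the ``quantitative formal-group computation'' you defer to cannot close it: valuations alone are not enough, and one would need strictly finer information (higher ramification groups, differents, or an explicit expansion of $u^{(k)}$ beyond its leading term).

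For contrast, the paper sidesteps all of this with a soft argument: $\Gal(K(z^{(n)},y^{(n)})/K(y^{(n)}))$ is cyclic of $p$-power order (you prove this yourself in your first paragraph), so its subfields form a chain and any nontrivial unramified part would force $K(z^{(1)},y^{(n)})/K(y^{(n)})$ to be unramified; an unramified extension of $K(y^{(n)})$ is a constant-field extension, so $K(z^{(1)},y^{(n)})$ would then be abelian over $K$, contradicting the non-abelian semidirect-product structure of $\Gal(K(z^{(1)},y^{(n)})/K)$ (translations by torsion do not commute with the scaling action on $y^{(n)}$, which is nontrivial for $n$ large by Theorem \ref{ss}). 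If you want to keep a computational flavour, the honest fix is to graft this group-theoretic step onto your setup; the Newton polygons by themselves only re-prove Igusa-type total ramification over $K$, not over $K(y^{(n)})$.
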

\begin{proof}
We first note that if $z^{(1)}$ is in $K$ then we can apply the transformation $z^{(n)}\mapsto z^{(n)} - z^{(1)}$ for $n\geq 1$ and that doesn't change the Galois action as we're translating by a $K$-point. Hence $z^{(n)}$ lie in $K(y^{(n)})$ for all $n\geq 1$. Therefore, we deal with the non-trivial case when $z^{(1)}$ doesn't lie in $K$. We know that $\Gal(K(z^{(n)},y^{(n)})/K(y^{(n)})) = \Z/p^n\Z$.  Hence all the sub-fields of $K(z^{(n)},y^{(n)})$ are of the form $K(z^{(k)}, y^{(n)})$ with $k\leq n$. Suppose there is an unramified sub-field of $K(z^{(n)},y^{(n)})$ then it contains $K(y^{(n)},z^{(1)})$. But this is a contradiction as we know that Galois group of $K(z^{(1)},y^{(n)})/K$ is not abelian. 
\end{proof}
This concludes the proof of all the cases in Theorem \ref{thm1}. The rest of the paper is devoted to proving an analogous result for K3 surfaces. To that end we start by reviewing  GSpin Shimura varieties and the Kuga-Satake construction.
\section{GSpin Shimura varieties and the Kuga-Satake construction} \label{7}

\subsection{Siegal Shimura varieties} We fix  notation as follows. 
Let $(H,\psi)$ be a $2g$ dimensional symplectic space over $\Q$ and $\GSp(H,\psi)$ be the group of symplectic similitudes. We have the Siegal Shimura data $(\GSp_{2g},X)$ where $X$ is the associated Hermitian space such that $\GSp$ acts on the space by conjugation. For a compact open $K^{\dagger}\subset \GSp(\A_f)$, we denote by $\Sh_{K^{\dagger}}(\GSp, X)$ to be the associated Siegal Shimura variety defined over $\Q$. When $K^{\dagger}$ is hyperspecial\footnote{A compact open subgroup $K^{\dagger}\subset G(\mathbb{A}_f)$ of a reductive group $G$ is called  hyperspecial at $p$ if $K^{\dagger}_p = G_{\Z_p}(\Z_p)$ where $G_{\Z_p}$ is a model over $\Z_p$ with generic fibre $G$.} at $p$, by \cite{intmod},\cite{2adicintmod}, it admits a smooth integral canonical model $\mathcal{A}_{g,K^{\dagger}}$ over $\Z_{(p)}.$ The Siegal Shimura variety is the moduli space of (polarised) $g$-dimensional abelian varieties with given a level structure. Hence it carries a universal family of abelian scheme which we denote as $\mathscr{A}^{\univ}\rightarrow \Sh_{K^{\dagger}}(\GSp, X)$.
The universal abelian scheme extends to the canonical integral model and  we denote by $\mathscr{A}_{\F_p}^{\univ}\rightarrow\mathcal{A}_{g,\F_p}$ the mod $p$ Siegal Shimura variety along with the universal family in characteristic $p$.
\subsection{GSpin Shimura data} Let $(L,q)$ be a quadratic $\Z$-lattice of signature $(n,2)$ that is self dual at $p$. Let $V = L\otimes_{\Z} \mathbb{Q}$ be the vector space of dimension $n+2$ with the associated  bilinear form given by $q$ such that  such that $q|_{L}$ belongs to $\Z.$  Let $Cl(-)$ denote the Clifford algebra. The Clifford algebra comes with a $\Z/2\Z$ grading $Cl^{+}(-)\oplus Cl^{-}(-)$. Let $G = \GSpin(L, q)$ be the group of spinor similitudes of $L$. That is, for any $\Q$-algebra $S$, we have:
$$\GSpin(L,q)(S) = \{x\in (Cl^{+}_S)^{\times} \mid xL_Sx^{-1} = L_S\}$$
Via the   map $ \GSpin(V, q)\rightarrow \SO(V,q); g\mapsto (v\mapsto g\cdot v\cdot g^{-1})$,
the group $G(\mathbb{R})$ acts on the Hermitian domain   
$\mathcal{D} = \{z\in V_{\C}\mid (z,z) =0, (z,\overline{z})<0\}/\C^{\times}$.
 The pair $(G,\mathcal{D})$ is the GSpin Shimura datum with reflex field $\Q.$ Let $K^{\sharp}\subset G(\A_f)$ be a compact open subgroup contained in $G(\A_f)\cap Cl(L\otimes \widehat\Z)^{\times}$ such that $K^{\sharp}$ is hyperspecial at $p$.
We denote by $\Sh_{K^{\sharp}}(G,\mathcal{D})$, the associated Shimura variety over $\Q$. We have that $$ \Sh_{K^{\sharp}}(G,\mathcal{D})(\C) = G(\Q)\backslash\mathcal{D}\times G(\A_f)/K^{\sharp}$$
We note that orthogonal Shimura varieties are finite \'etale quotients of GSpin Shimura varieties and hence we will deduce monodromy results in this case using this fact. 
By work of \cite{intmod} and \cite{spinsh} these admit a smooth canonical integral model over $\Z_p$ and hence we define their mod $p$ reduction to be GSpin and orthogonal Shimura varieties over $\F_p$.
We denote by $\mathcal{M}_{K^{\sharp}}$ the integral model for $\Sh_{K^{\sharp}}(G_{\Q},\mathcal{D})$ over $\Z_{(p)}$ and $\mathcal{M}_{\F_p}$ its mod $p$ reduction.   
\subsection{The Kuga-Satake construction}\label{KS}  An extremely useful fact about GSpin Shimura varieties is that they are of Hodge type. That is, there is an embedding of Shimura data $(G,\mathcal{D})\hookrightarrow (\GSp, X)$ over $\Q$. This gives us the \textit{Kuga-Satake embedding} on the level of Shimura varieties over $\Q$:
$$\iota :\Sh_{K^{\sharp}}(G,\mathcal{D})\hookrightarrow \Sh_{K^{\dagger}}(\GSp, X) $$
The pullback of the universal abelian scheme over the Siegal Shimura variety yields the Kuga-Satake abelian scheme $\mathscr{A^{\KS}}\rightarrow \Sh_{K^{\sharp}}(G,\mathcal{D})$.  We review briefly the Kuga-Satake map for a (polarised) K3 crystal of a point $[z]\in \mathcal{D}$. Let $\mathbb{S}$ denote $\Res_{\C/\R}\bbG_m$. We know that the associated unique homomorphism $h_{[z]}: \mathbb{S}\rightarrow G_{\R}$ induces a weight two Hodge structure on $V_{\C}$ with a positive definite two dimensional space $V^{2,0}\oplus V^{0,2}.$  The reductive group $\GSpin(V,q)$ acts on $C = Cl(V,q)$ by multiplication on the left. We call this resulting representation by $H$. Note that right multiplication by $C$ gives it a $C$-module structure and a $\GSpin(V,q)$ stable $\Z/2\Z$ grading $H^{+}\oplus H^{-}.$ The subset $U(H)\subset \End_C(H)$ preserving the grading on $H$ are precisely $(C^{+})^{\times}.$ We know that $\GSpin(V,q)\subset (C^{+})^{\times}$.  Once we pick a generator $e_1+ie_2$ of $V^{0,2}$, we get a bases of two orthogonal vectors $e_1$ and $e_2$ such that $q(e_1) = q(e_2) = 1$ spanning $V^{2,0}\oplus V^{0,2}\cap V_{\R}$.  The map $J: Cl^{+}(V_{\R})\rightarrow Cl^{+}(V_{\R}); v\mapsto e_1\cdot e_2 \cdot v$
 induces a complex structure (and hence a weight one Hodge structure) on $Cl(V_{\R})$ since $J^{2}\equiv -1.$ In terms of representation of the Deligne torus, the Hodge structure on $V$ corresponds to a morphism 
$\mathbb{S}\rightarrow \SO(V,q)$. By the above discussion we get a lift of this morphism to $\mathbb S\rightarrow \GSpin(V,q)\rightarrow GL(Cl(V))$ and this corresponds to the weight one Hodge structure. Moreover, given two vectors $f_1$ and $f_2$ such that $q(f_1) = q(f_2)>0$, we define a polarization on $Cl^{+}(V_{\R})$ using the trace map
$$\psi: Cl^{+}(V)\times Cl^{+}(V)\rightarrow \Q; (v_1,v_2) = tr(f_1\cdot f_2\cdot v_1^{*}, v_2) $$

The Kuga-Satake abelian variety over $\C$ is the abelian variety we get from the complex torus  $Cl^+(V_{\R})/Cl(V)$ along with the polarisation defined above. Moreover, these descend to the field of definition of the associated K3 surface in characteristic $0$. 

On the level of moduli spaces, this corresponds to lifting a point of the orthogonal Shimura variety to the GSpin Shimura variety which further embeds inside the Siegal moduli space. The classical construction extends to canonical integral models as well (see for instance \cite{spinsh}). We denote by $\mathscr{A}^{\KS}_{\F_p}\rightarrow \mathcal{M}_{\F_p}$ the mod $p$ family of Kuga-Satake abelian scheme over GSpin Shimura variety over $\F_p$. Furthermore, the Kuga-Satake construction respects the ordinary stratum. 
\begin{lem}\label{ordks}
Let $X$ be an ordinary K3 surface over $K$.  Then the associated Kuga-Satake abelian variety $\KS(X)$ over $K$ is ordinary as well. 
\end{lem}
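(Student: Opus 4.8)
The plan is to reduce the statement to a computation of Newton slopes under the crystalline Kuga-Satake correspondence. Recall that $X$ being ordinary means that the quadratic $F$-crystal $V := H^2_{\text{cris}}(X)(-1)$ feeding the Kuga-Satake construction is ordinary; that is, its Newton polygon agrees with its Hodge polygon, so that (after base change to a perfection) the slopes of $V$ are $-1, 0, 1$ occurring with multiplicities $1$, $\dim V - 2$, $1$, and $V$ splits orthogonally as a unit-root part $U$ of slope $0$ together with a hyperbolic plane $\langle e, f\rangle$ with $e$ of slope $-1$ and $f$ of slope $+1$. On the other side, $\KS(X)$ is ordinary precisely when the $F$-isocrystal $H^1_{\text{cris}}(\KS(X))\otimes\Q_p$ has all slopes in $\{0,1\}$; since the slopes are symmetric about $\tfrac12$ (the abelian variety being principally polarised), it suffices to show that no slope strictly between $0$ and $1$ occurs.

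First I would invoke the crystalline realisation of the Kuga-Satake construction, compatible with the complex construction of Section \ref{KS} and with the integral model: the $F$-isocrystal $H^1_{\text{cris}}(\KS(X))\otimes\Q_p$ is identified with the even Clifford (spinor) representation $Cl^{+}(V)$, where the Frobenius has the form $\Phi = \mathsf{s}\cdot Cl^{+}(F_V)$ and $\mathsf{s}$ is the twist coming from the $\GSpin$-similitude, i.e. the weight-one central cocharacter that forces $H^1$ to be pure of weight one. This is the crystalline shadow of the fact that, under the Kuga-Satake embedding $\GSpin \hookrightarrow \GSp$, the Hodge cocharacter of $\GSpin$ maps to the standard Siegel cocharacter.

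Next I would carry out the slope bookkeeping. The weights of the spinor representation are $\tfrac12(\pm\epsilon_1\pm\cdots\pm\epsilon_r)$; evaluating them on the Newton cocharacter of $V$ determined by the ordinary slopes $-1,0,1$ (which inside the maximal torus of $\SO(V)$ is the single coweight $\epsilon_1^{\vee}$ attached to the hyperbolic plane $\langle e,f\rangle$, all other coordinates being $0$) yields slopes $\pm\tfrac12$ for $Cl^{+}(F_V)$. The similitude twist $\mathsf{s}$ then contributes a uniform $+\tfrac12$, shifting these to $0$ and $1$. Hence every slope of $H^1_{\text{cris}}(\KS(X))$ lies in $\{0,1\}$, and by the symmetry of the Newton polygon the slopes $0$ and $1$ occur with equal multiplicity $g = \dim \KS(X)$; thus $\KS(X)$ is ordinary. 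Equivalently, one can phrase the entire argument in terms of strata: $X$ is ordinary if and only if its associated point is $\mu$-ordinary on the $\GSpin$ Shimura variety, and the Kuga-Satake embedding carries the $\GSpin$ $\mu$-ordinary Newton datum to the ordinary (Siegel) one, so the image point is $\mu$-ordinary in $\mathcal{A}_g$, i.e. ordinary.

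The main obstacle is precisely the tracking of the central twist $\mathsf{s}$: the naive spinor slopes are the half-integers $\pm\tfrac12$, and ordinariness of $\KS(X)$ hinges on showing that the weight-one twist shifts them to the integers $0$ and $1$ rather than leaving a slope in the open interval $(0,1)$. Making this rigorous amounts to identifying the Newton cocharacter of $\KS(X)$ with the image of the $\mu$-ordinary cocharacter of $\GSpin$ under the Kuga-Satake embedding. A secondary point is that $K = \F_q((t))$ is imperfect, so ``ordinary'' and the Newton polygon must be read off the $F$-crystal correctly; I would handle this by testing the Newton polygon after base change to a perfection of $K$ (equivalently, by working with the $F$-crystal attached to the corresponding point of $\mathcal{M}_{\F_p}$), since ordinariness is insensitive to such a base change.
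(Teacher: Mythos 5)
Your proposal is correct, but it takes a genuinely different (more self-contained) route than the paper. The paper's entire proof is a citation: ordinariness of the Kuga--Satake abelian variety of an ordinary K3 over a perfect field is Theorem 7.8 of Ogus, and one reduces to that case by passing to the perfection of $K$, since ordinariness is insensitive to this base change. You instead unwind what that theorem actually says: you invoke the crystalline Kuga--Satake comparison identifying $H^1_{\text{cris}}(\KS(X))\otimes\Q_p$ with $Cl^{+}(V)$ equipped with a similitude-twisted Clifford Frobenius, observe that the ordinary Newton cocharacter of $V$ (slopes $-1,0,1$ with multiplicities $1,\dim V-2,1$, i.e.\ the coweight $\epsilon_1^{\vee}$ of the hyperbolic plane) pairs with the spinor weights $\tfrac12(\pm\epsilon_1\pm\cdots\pm\epsilon_r)$ to give $\pm\tfrac12$, and that the weight-one central twist shifts these to $0$ and $1$. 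This is exactly the $\mu$-ordinary compatibility of the Kuga--Satake embedding, and the bookkeeping is right; your handling of the imperfect base field coincides with the paper's. The trade-off: the paper's citation is shorter and outsources the crystalline comparison entirely, while your argument makes the mechanism visible and generalizes immediately to arbitrary $\GSpin$ Shimura varieties of signature $(n,2)$ rather than just the K3 range; its one external input, the identification of the Frobenius on $H^1_{\text{cris}}(\KS(X))$ with the twisted $Cl^{+}(F_V)$, is a real theorem that you correctly flag but do not prove, so at the level of rigor the two proofs lean on comparable black boxes.
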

\begin{proof}
This is Theorem 7.8 in \cite{ogus} for perfect fields. Since the property of being ordinary stays the same after base change, we argue in our setting by replacing $K$ with its perfection to conclude the result.  
\end{proof}
\section{Toroidal compactifications and mixed Hodge structures}\label{8}

\subsection{Toroidal compactifications over $\mathbb{C}$}
This section formalizes the discussion in Subsection \ref{philosophy}. We mainly follow the exposition in \cite{keerthitor} and \cite{salimk3} to describe toroidal compactifications over $\C$ and the structure of the boundary components. We denote by $G$ to be the algebraic group $\GSpin$ or $\GSp$. Let $\Sh_{K^{\dagger}}(G,\mathcal{D})$ be the associated Shimura variety where  $\mathcal{D}$ be the Hermitian domain and $K^{\dagger}\subset G(\mathbb{A}_f)$ is the compact open which hyperspecial at $p$. Let $\mathcal{S}_{K^{\dagger}}$ denote the canonical integral model of $\Sh_{K^{\dagger}}(G,\mathcal{D})$ defined over $\Z_{(p)}.$ By work of \cite{pink}, there exists a proper toroidal compactification $$\Sh_{K^{\dagger}}(G,\mathcal{D})\hookrightarrow \Sh_{K^\dagger}(G,\mathcal{D})^{\text{tor}}$$
in the category of Deligne-Mumford stacks over $\Q$. By \cite{keerthitor}, $\mathcal{S}_{K^{\dagger}}$ admits a toroidal compactification $\mathcal{S}^{\text{tor}}_{K^{\dagger}}$ that extends the compactification of $\Sh_{K^{\dagger}}(G,\mathcal{D})$ over $\Q$. The compactification $\mathcal{S}^{\text{tor}}_{K^{\dagger}}$ depends on a certain cone decomposition (see \cite{keerthitor}, Section $2$ and $4$) and consists of a stratification by locally closed subschemes that can be described by the following components. 
\begin{defn}
Let $P\subset G$ be a parabolic subgroup. We say $P$ is admissible if $P$ is either maximal in $G$ or equals $G.$
\end{defn}
\begin{defn} A cusp label representative is a triple $\Phi = (P,\mathcal{D}^{\circ}, h)$ where $P$ is an admissible parabolic subgroup of $G$, $\mathcal{D}^{\circ}\subset \mathcal{D}$ is a connected component and $h\in G(\mathbb{A}_f)$. 

\end{defn}

 To any cusp label representative, we can attach the data of a rational boundary component as follows. We denote by $U_{\Phi}$ the unipotent radical of $P$ and let $W_{\Phi}$ denote its centre. Let $Q_{\Phi}$ be the unique normal subgroup of $P$ as defined in 
\cite{pink} Section 4.7. Let $\mathcal{D}_{\Phi} = Q_{\Phi}(\R)W_{\Phi}(\C)\mathcal{D}^{\circ}$ and  $K_{\Phi} = hK^{\dagger}h^{-1}\cap Q_{\Phi}(\A_f)$. The pair $(Q_{\Phi},\mathcal{D}_{\Phi})$ defines a mixed Shimura variety whose $\C$-points are given by $$\Sh_{K_{\Phi}}(Q_{\Phi},\mathcal{D}_{\Phi})(\C) = Q_{\Phi}(\Q)\backslash\D_{\Phi}\times Q_{\Phi}(\A_f)/K_{\Phi} $$
Further, let $\overline{Q}_{\Phi} = Q_{\Phi}/W_{\Phi}$ and let $\overline{\mathcal{D}}_{\Phi} = W_{\Phi}/\mathcal{D}_{\Phi}$ and $\overline{K}_{\Phi}\subset \overline{Q}_{\Phi}$ be the image of $K_{\Phi}$. Then $(\overline{Q}_{\Phi},\overline{\mathcal{D}}_{\Phi})$ gives a mixed Shimura data such that $\C$-points of the mixed Shimura variety given by 
$$\Sh_{K_{\Phi}}(\overline{Q}_{\Phi},\overline{\mathcal{D}}_{\Phi})(\C) = \overline{Q}_{\Phi}(\Q)\backslash\overline{\D}_{\Phi}\times \overline{Q}_{\Phi}(\A_f)/\overline{K}_{\Phi}$$
 Next, let $V_{\Phi} = Q_{\Phi}/W_{\Phi}$, $G_{\Phi,h} = Q_{\Phi}/U_{\Phi}$, $\mathcal{D}_{\Phi,h} = V_{\Phi}/\mathcal{D}_{\Phi}$ and $K_{\Phi,h}\subset G_{\Phi, h}$. Then we can attach a Shimura variety to the pair $(G_{\Phi,h}, \D_{\Phi, h})$ with $\C$-points parametrised by
$$\Sh_{K_{\Phi}, h}(G_{\Phi,h}, \mathcal{D}_{\Phi,h})(\C) = 
G_{\Phi,h}(\Q)\backslash\D_{\Phi,h}\times G_{\Phi,h}(\A_f)/K_{\Phi,h}$$
From this data, we get a tower of the mixed Shimura varieties: 
$$\Sh_{K_{\Phi}}(Q_{\Phi}, \mathcal{D}_{\Phi}) (\C)\rightarrow \Sh_{\overline{K}_{\Phi}}(\overline{Q}_{\Phi}, \overline{\mathcal{D}}_{\Phi}) (\C)\rightarrow \Sh_{K_{\Phi}, h}(G_{\Phi,h}, \mathcal{D}_{\Phi,h})(\C)$$

 By \cite{keerthitor}, these admit canonical integral models which we denote by $\mathcal{S}_{K_{\Phi}}(Q_{\Phi}, \mathcal{D}_{\Phi}),\mathcal{S}_{\overline{K}_{\Phi}}(\overline{Q}_{\Phi}, \overline{\mathcal{D}}_{\Phi})$ and $\mathcal{S}_{K_{\Phi}, h}(G_{\Phi,h}, \mathcal{D}_{\Phi,h})$ over $\Z_{(p)}$ extending the canonical models over the reflex field $\Q.$ 
Once we choose a cone decomposition, the component $\mathcal{S}_{K_{\Phi}}(Q_{\Phi}, \mathcal{D}_{\Phi})$ embeds inside a mixed Shimura variety over $\mathcal{S}_{\overline{K}_{\Phi}}(\overline{Q}_{\Phi}, \overline{\mathcal{D}}_{\Phi})$  whose completion along the image of this component is isomorphic to the completion of $\mathcal{S}^{\text{tor}}_{K^{\dagger}}$ along the respective stratum (see \cite{keerthitor}, Theorem 4.1.5).
\begin{rem}
The mixed Shimura variety $\mathcal{S}_{\overline{K}_{\Phi}}(\overline{Q}_{\Phi}, \overline{\mathcal{D}}_{\Phi})$ is the toroidal boundary. The boundary component $\mathcal{S}_{K_{\Phi}, h}(G_{\Phi,h}, \mathcal{D}_{\Phi,h})$ is the Baily-Borel boundary which is an honest Shimura variety. For example, consider $\mathcal{A}_g$, then the Baily-Borel boundary of its compactification is either zero dimensional or a union of lower dimensional Siegal Shimura varieties: $\mathcal{A}_{1}\cup \mathcal{A}_{2}\cup\dots \cup \mathcal{A}_{2g - 1}$. Whereas for $\GSpin$ and orthogonal Shimura varieties, the Baily-Borel boundary is either zero dimensional or the modular curve $\mathcal{A}_1$ (see \cite{boundarygspin}, Section 1).  

\end{rem}

\subsubsection{Variation of mixed Hodge structures on the boundary}
For this section, we let $G=\GSp(H)$. The mixed Shimura variety $\Sh_{K_{\Phi}}(Q_{\Phi}, \mathcal{D}_{\Phi}) (\C)$ carries a variation of mixed Hodge structures of weights $(0,0),(-1,0), (0,-1)$ and $(-1,-1)$. The filtrations that determine the mixed Hodge structure are described as follows. Let $P_{\Phi}$ be an admissible parabolic subgroup, associated to a cusp label representative $\Phi$. For every $y\in \mathcal{D}_{\Phi}$, we get a co-character $\mu_{y}:\bbG_m\rightarrow \mathbb{S}_\C\rightarrow P_{\Phi,\C}$ where the map $\bbG_m\rightarrow \mathbb{S}_{\C
}$ is given by $z\mapsto (z,z)$. This splits the following ascending filtration fixed by $P_{\Phi,\C}$ on $H_{\C}$. 
$$W_{\bullet}V :W_{-3}H =0\subset W_{-2}H = I\subset W_{-1}H = I^{\perp}\subset W_{0}H =H.$$

 Here $I$ is the isotropic subspace stabilized by $P_{\Phi}$. In addition, we get an ascending filtration $F^{\bullet}V$
from the other co-character $\bbG_m\rightarrow \mathbb{S}_\C\rightarrow P_{\Phi,\C}$ where the first map is given by $z\mapsto (z,1)$. Then the pair $(W_{\bullet}V, F^{\bullet}V)$ determines   a (partly) polarised  mixed Hodge structure. The graded piece $\gr_1(H) = I^{\perp}/I$ corresponds to a polarised weight one Hodge structure while $\gr_0(H) =V/I^{\perp}$ is a weight zero Hodge structure. Locally this corresponds to the data of the Raynaud extension $(Z,\alpha)$  such that $\alpha: \Z^{r}\hookrightarrow Z$ is a lattice in $Z$. Moreover, the toroidal boundary $\Sh_{\overline{K}_{\Phi}}(\overline{Q}_{\Phi}, \overline{\mathcal{D}}_{\Phi}) (\C)$ admits a variation of mixed sub-Hodge structure of weights $(-1,0),(0,-1)$ and $(-1,-1).$ Via the first  map (1) of the tower
$$\Sh_{K_{\Phi}}(Q_{\Phi}, \mathcal{D}_{\Phi}) (\C)\xrightarrow{(1)} \Sh_{\overline{K}_{\Phi}}(\overline{Q}_{\Phi}, \overline{\mathcal{D}}_{\Phi}) (\C)\xrightarrow{(2)} \Sh_{K_{\Phi}, h}(G_{\Phi,h}, \mathcal{D}_{\Phi,h})(\C)$$
we lose information of the lattice. That is, locally the toroidal boundary parametrises the Raynaud extension $Z$. The Baily-Borel boundary carries a variation of polarised weight one pure Hodge structure on $\gr_1(H) = I^{\perp}/I$ with the ascending filtration: $$F^{0}I^{\perp}/I \subset I^{\perp}/I$$ 
Hence the second map $(2)$ corresponds to taking the quotient of the Raynaud extension to get its abelian quotient. 

\section{Raynaud extension of the Kuga-Satake abelian variety}\label{raynaudks}
 Let $(G,\mathcal{D})$ be the $\GSpin$-Shimura data and $\iota$ be the Kuga-Satake embedding $(G,\mathcal{D})\hookrightarrow (\GSp, X)$ data  over $\Q$. Since GSpin Shimura varieties are covers of orthogonal Shimura varieties, there is a one to one correspondence between their parabolic subgroups and hence, their boundary components. The boundary components are given by two kinds of admissible parabolic subgroups that correspond to either one dimensional  or  zero dimensional boundary components. Let  $P\subset G$ be an admissible parabolic subgroup, then there exists a unique minimal parabolic subgroup $P^{\prime}\subset \GSp$ containing $\iota(P)\subset P^{\prime}$ (see \cite{keerthitor}, Section 2.1.28). This parabolic subgroup determines the weight filtration of the mixed Hodge structure on the boundary of Siegal Shimura varieties and hence the Raynaud extension of the Kuga-Satake abelian variety. We compute this below. 
\subsubsection{Weight filtration on the one dimensional boundary} We follow \cite{spinsh}, Section 1.9 for the calculations in this section.  Let $(V,q)$ be the  quadratic space associated to $(G,\mathcal{D})$ and $H = Cl(V)$ be the Clifford algebra.  Let $G_0 = \SO(V,q)$.   Let $P_0\subset G_0$ be a parabolic subgroup and $P\subset G$ denote the corresponding parabolic subgroup of $G$. We know that this gives a weight filtration $W_{\bullet}V$ 

$$0=W_{-2}V\subset W_{-1}V = I\subset W_{0}V=I^{\perp} \subset W_{1}V = V$$

Let $\mu_0: \bbG_m\rightarrow \SO(V)\subset \GL(V)$ be the co-character that splits this filtration to give us a direct sum
$$I_{-1}\oplus I_0\oplus I_{1}$$ where $I_{-1} = I$ has a weight $-1$ Hodge structure and $I^{\perp} = I\oplus I_0$.  The co-character acts as multiplication by $z^{i}$ on $I_{i}$ for $-1\leq i\leq 1$. We find a lift $\mu: \bbG_m\rightarrow \GSpin(V,q)$ of the co-character $\mu_0$ that describes the associated weight filtration $W_{\bullet}H$ of $P.$  Since $I$ is isotropic and acts on $C$ by multiplication on the left, we have that $\wedge^{\bullet} I\hookrightarrow \End_{\Q}C$. We denote by $\im(\wedge^i I)$, the union of the images of endomorphisms $\wedge^i I$. Note also that $\im(\wedge^i I)\subset \im(\wedge^{i-1}I)$. For $i=0,1,\dots r+1,$ we define the weight filtration $W_{\bullet}H$ to be $W_{-i}H = \im(\wedge^{i}I_{-1})$ on $H$ where $r$ equals the number of $\Q$-generators of $I$.  Consider the ascending filtration $\im(\wedge^{r-i} I_{1})$ and define $H_{-i} = \im(\wedge^{r-i} I_{1})\cap I_{-i}.$ This defines a splitting of the filtration $W_{\bullet} H$. Now define the co-character $\mu: \bbG_m\rightarrow \GL(V)$ that acts as $z^{-i}$ on $H_{-i}$. Each $H_{-i}$ is $C$-stable and $\mu(\bbG_m)$ preserves the $\Z/2Z$ grading on $H_{-i}$ and hence on $H$. Thus, $\mu(\bbG_m)$ factors through $C^+$. We further check that the composition of $\mu$ with the conjugation map from $\GSpin(V,q)\rightarrow \SO(V,q)$ gives us $\mu_0$. This is because 

  \[v\cdot H_{-i} \subset \begin{cases} 
     H_{-i-1} & v\in I_{-1} \\
 H_{-i}    &  v\in I_0\\
     H_{-i+1} & v\in I_{1} 
   \end{cases}
\]

Writing $\mu(z)\cdot( v\cdot H_{-i}) = \mu(z)\cdot v\cdot \mu(z)^{-1}\mu(z)\cdot H_{-i}$ and using the above observation we deduce that

 \[\mu(z)\cdot v\cdot \mu(z)^{-1} =  \begin{cases} 
     z^{-1}\cdot v & v\in I_{-1} \\
v   &  v\in I_0\\
     z\cdot v & v\in I_{1} 
   \end{cases}
\]

Hence $\mu$ factors through $\GSpin(V,q)$ and lifts $\mu_0$.
As discussed in Section \ref{KS}, left multiplication by $\GSpin(V,q)$ on $H$, gives a co-character $\bbG_m\rightarrow \GSp(H)$. 
Therefore, the filtration on $W_{\bullet}H$ is the weight filtration on the boundary of the Shimura variety associated to $\GSp(H,\psi)$.

\subsubsection{Type II reduction} Suppose $X$ has type II reduction. Then the boundary component corresponds to the parabolic subgroup that is the stabiliser of a two dimensional isotropic subspace $I_{\Q}$. Let  $P_0$ be the  parabolic subgroup and $I = \langle{e_1,e_2}\rangle$. Now using the computation of the previous section, we know that the weight filtration $H$ is the following:
$$0\subset W_{-2}H= \im(e_1e_2)\subset W_{-1}H = \im(\wedge^1I)\subset W_0H =H$$

Further we have a splitting of the filtration $H_{-2}\oplus H_{-1}\oplus H_0$ such that $H_{-i}$ has a weight $i$-Hodge structure for $0\leq i\leq 2$.

\begin{lem}\label{dimiso}
The dimension of $W_{-2}H$ over $\Q$ is $d/2$ where $2d = \dim H.$
\end{lem}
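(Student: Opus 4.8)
The plan is to compute $\dim_\Q W_{-2}H$ directly by unwinding the definition. By the construction recalled just above, $W_{-2}H = \im(\wedge^2 I)$, and since $I = \langle e_1, e_2\rangle$ is two-dimensional, $\wedge^2 I$ is spanned by $e_1 \wedge e_2$, whose image in $\End_\Q C$ is left multiplication by the element $e_1 e_2 \in Cl(V)$. Thus $W_{-2}H = \im(e_1 e_2)$ is the image of a \emph{single} endomorphism of $H = Cl(V)$, and it suffices to compute the rank of this operator.

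First I would record the Clifford relations forced by isotropy of $I$. Since $q|_I = 0$, each $e_i$ is isotropic, so $e_i^2 = 0$ in $Cl(V)$; and since $e_1, e_2$ are orthogonal they anticommute, $e_1 e_2 = -e_2 e_1$ (in particular $(e_1 e_2)^2 = 0$, consistent with $W_{-2}H$ being the bottom of the weight filtration). Next I would pass to a convenient basis: complete $e_1, e_2$ to an ordered $\Q$-basis $e_1, e_2, v_3, \dots, v_{n+2}$ of $V$, so that the ordered monomials form a $\Q$-basis of $Cl(V)$, of total cardinality $2^{n+2} = \dim H$. Left multiplication by $e_1 e_2$ then acts on these monomials transparently: if a monomial already contains the factor $e_1$ or $e_2$, reordering $e_1 e_2$ into standard position creates a repeated $e_i$ and the product vanishes because $e_i^2 = 0$; whereas if a monomial $m$ involves only $v_3, \dots, v_{n+2}$, then $e_1 e_2 \cdot m$ is already in standard order and is a nonzero basis monomial. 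Distinct such $m$ yield distinct monomials $e_1 e_2 \cdot m$, hence linearly independent images.

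Therefore the image is spanned by the linearly independent family $\{\, e_1 e_2 \cdot m \,\}$ indexed by monomials $m$ in the $n$ vectors $v_3, \dots, v_{n+2}$, of which there are exactly $2^n$. This gives $\dim_\Q W_{-2}H = 2^n = \tfrac14 \cdot 2^{n+2} = \tfrac14 \dim H = d/2$, using $2d = \dim H$. I expect the only delicate point to be the sign and reordering bookkeeping needed to confirm that $e_1 e_2 \cdot m$ is a genuine nonzero standard basis monomial and that different $m$ give independent images; this is routine given $e_i^2 = 0$ and anticommutativity. A cleaner but equivalent alternative would be to complete $I$ to a pair of hyperbolic planes, write $Cl(V) \cong Cl\langle e_1,f_1\rangle \mathbin{\hat\otimes} Cl\langle e_2,f_2\rangle \mathbin{\hat\otimes} Cl(V_0)$ as a graded tensor product, and note that on each $Cl\langle e_i,f_i\rangle \cong M_2(\Q)$ left multiplication by $e_i$ has rank half the dimension; multiplying the three ranks again yields $\tfrac14 \dim H = d/2$.
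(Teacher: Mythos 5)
Your proof is correct, and your ``cleaner alternative'' at the end is essentially the paper's own argument: the paper completes $e_1,e_2$ to hyperbolic planes $U_1=\langle e_1,e_3\rangle$, $U_2=\langle e_2,e_4\rangle$, writes $Cl(V)=Cl(U_1)\otimes Cl(U_2)\otimes Cl(V')$, and reads off $e_1e_2\,Cl(V)=\langle e_1e_2,e_1e_2e_3,e_1e_2e_4,e_1e_2e_3e_4\rangle\otimes Cl(V')$, of dimension $\tfrac14\dim H=d/2$. Your main line of argument --- counting standard monomials in an ordered basis beginning with $e_1,e_2$ --- is an equivalent, slightly more elementary computation of the same rank. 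The one bookkeeping point you flag is indeed harmless, but only because you placed $e_1,e_2$ first in the ordering: a standard monomial containing $e_1$ has the form $e_1m'$, so $e_1e_2\cdot e_1m'=-e_1e_1e_2m'=0$ uses only $e_1^2=0$ and $e_1e_2=-e_2e_1$ (valid since $I$ is totally isotropic), and one never needs to commute $e_1$ or $e_2$ past the $v_j$, which need not be orthogonal to $I$ (indeed they cannot all be, since $I\subset I^\perp$ and $\dim I^\perp=n$). The tensor-product route buys a basis-free statement and generalizes immediately to the type III case ($\im(e_1)$ of dimension $d$); your monomial count buys transparency at the cost of fixing an ordered basis.
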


\begin{proof}
Let $e_3$ be such that $(e_1, e_3) = 1$. Let $U = \langle{e_1,e_3}\rangle$. Similarly let $e_4\in U^{\perp}$ be such that $(e_2,e_4) = 1$. Let $U_2 = \langle{e_2,e_4}\rangle$. Then $(V,q) =(U_1,q)\oplus (U_2,q)\oplus (V^{\prime},q)$ and $Cl(V) = Cl(U_1)\otimes Cl(U_2)\otimes Cl(V^{\prime})$. Since $e_1$ and $e_2$ are isotropic, we see $$e_1e_2Cl(V) = \langle{e_1e_2, e_1e_2e_3, e_1e_2e_4, e_1e_2e_3e_4}\rangle\otimes Cl(V^{\prime}).$$ This proves the lemma.
\end{proof}
\begin{prop}
    The pure Hodge structure  on $\gr_1(H) = H_{-1}$ of weight one corresponds to a product of $d/2$ copies of isogenous elliptic curves where $2d= \dim H.$
\end{prop}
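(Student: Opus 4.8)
The plan is to show that the pure weight-one Hodge structure on $H_{-1}$ is \emph{isotypic} of type $H^1(E)$, where $E$ is the elliptic curve attached to the one-dimensional boundary point, and then to read off the multiplicity from a dimension count. Since $H_{-1} = \gr_1(H)$ is a polarizable Hodge structure of weight one (the polarization being induced by the Kuga--Satake form $\psi$), it corresponds to an abelian variety $B$ with $\dim B = \tfrac12\dim_{\Q} H_{-1}$, and the assertion of the proposition is exactly that this $B$ is isogenous to a power of a single elliptic curve.

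First I would pin down $\dim_{\Q} H_{-1}$. Lemma \ref{dimiso} gives $\dim_{\Q} H_{-2} = d/2$. The weight filtration $W_\bullet H$ is centred at the pure weight one, and the polarization on $H$ induces a perfect pairing between the top graded piece $H_{-2}$ (weight $2$) and the bottom piece $H_0$ (weight $0$); hence $\dim_{\Q} H_0 = \dim_{\Q} H_{-2} = d/2$. Therefore $\dim_{\Q} H_{-1} = \dim_{\Q} H - \dim_{\Q} H_0 - \dim_{\Q} H_{-2} = 2d - d = d$, so $\dim B = d/2$, already matching $\dim E^{d/2}$.

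The heart of the matter is identifying the Hodge structure on $H_{-1}$ itself. I would use the orthogonal decomposition $(V,q) = (U,q)\oplus (V',q)$ with $U = \langle e_1,e_2,e_3,e_4\rangle$ a sum of two hyperbolic planes and $V'$ positive definite of weight zero, giving $Cl(V) = Cl(U)\otimes Cl(V')$ as in the proof of Lemma \ref{dimiso}. Because $V'$ carries the trivial weight-zero Hodge structure, $Cl(V')$ is a trivial Hodge structure of dimension $2^{n-2}$, and passing to the weight-one graded piece yields $H_{-1} \cong \gr_1(Cl(U)) \otimes Cl(V')$. All of the nontrivial Hodge structure is concentrated on $U$: the weight $\mp 1$ summands $\langle e_1,e_2\rangle$ and $\langle e_3,e_4\rangle$ are exactly $H_1(E)$ and $H^1(E) = \gr_1 V$ for the elliptic curve $E$ of the $\mathcal{A}_1$-boundary point. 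Equivalently, the Hodge cocharacter on the graded pieces factors through the boundary group $G_{\Phi,h}$, which for the one-dimensional boundary is an isogeny factor of the Mumford--Tate group $\GL_2$ of $E$ -- this is precisely the statement that the one-dimensional Baily--Borel boundary is the modular curve $\mathcal{A}_1$. Since $\gr_1(Cl(U))$ is a polarizable $\GL_2$-Hodge structure of pure weight one, and the only weight-one constituents of the Clifford representation are copies of the standard representation $\mathrm{std} = H^1(E)$ (the symmetric powers $\Sym^k \mathrm{std}$ contributing weight $k$), it is a direct sum of copies of $H^1(E)$. Tensoring with the trivial factor $Cl(V')$ keeps it isotypic of type $H^1(E)$, so $H_{-1}\cong H^1(E)^{\oplus d/2}$ up to isogeny and $B\sim E^{d/2}$.

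The main obstacle is this last identification, namely controlling the Hodge structure on $\gr_1(Cl(U))$ rather than just its dimension: one must check that Clifford multiplication intertwines the elliptic-curve Hodge structure on $\langle e_3,e_4\rangle$ with the induced structure on the weight-one monomials, so that no Tate classes or higher symmetric powers appear in $H_{-1}$. I would carry this out concretely by listing the weight-one monomials in the sixteen-dimensional $Cl(U)$ (those of net weight one in $e_1,e_2,e_3,e_4$), grouping them into four $\GL_2$-stable pairs each isomorphic to $H^1(E)$ so that $\gr_1(Cl(U))\cong H^1(E)^{\oplus 4}$, and then deducing the general case by tensoring with $Cl(V')$ of dimension $2^{n-2}$; the bookkeeping is routine once the $\GL_2$-action through the boundary datum is made explicit.
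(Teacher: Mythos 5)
Your strategy is sound and arrives at the right conclusion, but it takes a different route from the paper and defers the decisive step. The paper's proof is a direct identification: since $W_{-1}H=\im(\wedge^1 I)$ and $H_{-2}=\im(e_1e_2)$, the complement is $H_{-1}=\langle e_1,e_2\rangle\otimes Cl(I_0\oplus I_1)$, so the weight-one structure is \emph{visibly} a direct sum of $\dim_{\Q} Cl(I_0\oplus I_1)=d/2$ copies of the two-dimensional elliptic-curve Hodge structure on $I$, polarized by $\psi$. You instead (i) compute $\dim_\Q H_{-1}=d$ by pairing $H_0$ against $H_{-2}$ under the polarization --- a correct consistency check the paper does not need --- and (ii) argue that $\gr_1(Cl(U))$ is isotypic for the standard representation of the boundary $\GL_2$. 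Step (ii), which you rightly flag as the main obstacle, is where your shortcut is too coarse as stated: the relevant ``weight'' is the central character, and $\Sym^k\mathrm{std}\otimes{\det}^m$ has central weight $k+2m$, so constituents such as $\Sym^3\mathrm{std}\otimes{\det}^{-1}$ also sit in weight one and are not excluded by your phrase ``the symmetric powers $\Sym^k\mathrm{std}$ contribute weight $k$.'' Ruling them out requires either effectivity of the weight-one graded piece of the limit mixed Hodge structure (which is usually \emph{deduced} from knowing $\gr_1$ is the $H^1$ of the abelian part, so this risks circularity) or the explicit monomial bookkeeping you postpone to the last sentence. Once you carry out that bookkeeping, it collapses to exactly the paper's tensor decomposition: the weight-one monomials are the $e_i\cdot w$ with $i\in\{1,2\}$ and $w\in Cl(I_0\oplus I_1)$, grouped into $d/2$ copies of $I$. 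So the proof is correct in outline, but the load-bearing identification should be written out explicitly rather than asserted via the $\GL_2$-weight argument.
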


\begin{proof}
 We know that $\im(\wedge^1 I) = H_{-2}\oplus H_{-1}$. Since $H_{-2} = \im(e_1e_2)$, therefore $H_{-1} = \langle{e_1,e_2}\rangle\otimes Cl(I_0\oplus I_{1})$. Hence the Hodge structure on $H_{-1}$ is the direct sum of the weight one Hodge structure on $I = \langle{e_1,e_2}\rangle$. Note that the polarization $\psi$ in Section \ref{KS}, induces a polarization on $H_{-1}$ and hence this polarised weight one Hodge structure corresponds to a product of elliptic curves.

\end{proof}

\subsubsection{Weight filtration on the zero dimensional boundary} For the zero dimensional boundary, we work with the following weight filtration $W_{\bullet}V$:
$$0\subset I = W_{-2}V = W_{-1}V\subset W_{0}V = I^{\perp}\subset W_1V = V$$ 
We can find a co-character $\mu_0:
\bbG_m\rightarrow \SO(V)$ that provides a splitting 
$$I_{-2}\oplus I_0\oplus I_2$$
Now in order to find a lift of this co-character, the calculation in the previous section goes through with slightly different indices. 
\subsubsection{Type III reduction}
In this case $X$ reduces the boundary component that corresponds to the stabiliser of a one dimensional isotropic subspace $I_{\Q}$. Let  $P_0$ be the   parabolic subgroup that stabilises $I = \langle{e_1}\rangle$. Using the computation of the previous section, we know that the weight filtration $H$ is the following:
$$0\subset W_{-2}H =  W_{-1}H= \im(e_1)\subset W_0H =H$$
In this case, the isotropic subspace $W_{-1}H$ has dimension $d$ using a similar argument as done in Lemma \ref{dimiso}. Hence its perpendicular is itself and $\gr_1(H)$ is trivial. That is, the abelian quotient of the Raynaud extension in this case is trivial. 
\subsection{Raynaud extension in characteristic $p$.}

The boundary component  $\mathcal{S}_{K_{\Phi}}(Q_{\Phi}, \mathcal{D}_{\Phi})$ of the Siegal Shimura variety over $\C$ parametrizes a semi-abelian scheme $\mathcal{Z}_\C$ along with a lattice, which we call the universal Raynaud extension over $\C$. By work of \cite{keerthitor} we get an algebraic construction of the boundary components and get the universal Raynaud extension $\mathcal{Z}$ over the canonical integral model   $\mathcal{S}_{K_{\Phi}}(Q_{\Phi}, \mathcal{D}_{\Phi})$, extending its model over $\Q$.  We denote by $\mathcal{Z}_{\F_p}$ its mod $p$ reduction. Let $A$ be an abelian variety over $K$ with bad reduction. Then $A$ is a $K$-point of the formal completion of the connected component of the (compactified) Siegal  moduli space containing $A$ along the toroidal boundary, denoted by $\widehat{\mathcal{A}_{g, K^{\dagger}}^{\text{tor}}}$.  Let $(Z,M)$ be the pull back of $\mathcal{Z}_{\F_p}$ over the $K$-point. Then we have $A\simeq Z/M$ over $K$. See for instance \cite{caraianiraynaud} Section 2.5, and \cite{keerthitor} Section 2.2. In particular, consider Kuga-Satake abelian variety $\KS(X)$ with semi-abelian reduction, that is a $K$-point of the formal completion $\widehat{\mathcal{M}_{K^{\sharp}}^{\text{tor}}}$ along the toroidal boundary stratum the point reduces to. Then the following diagram between the boundary components of $\GSpin$-Shimura variety and Siegal Shimura variety commutes over $\Z_p$ (and hence mod $p)$. 

\[ \begin{tikzcd}
\widehat{\mathcal{M}_{K^{\sharp}}^{\text{tor}}} \arrow{r} \arrow[swap]{d}{} &  \widehat{\mathcal{A}_{g, K^{\dagger}}^{\text{tor}}} \arrow{d}{}& \KS(X)\simeq Z_{\KS}/M\arrow{d} \\%
\mathcal{S}_{\overline{K^{\sharp}}_{\Phi^{\prime}}}(\overline{Q^{\prime}}_{\Phi^{\prime}}, \overline{X_{\Phi^{\prime}}}) \arrow[swap]{d}{} & \mathcal{S}_{\overline{K}^\dagger_{\Phi}}(\overline{Q}_{\Phi}, \overline{\mathcal{D}_{\Phi}}) \arrow{d}{} & Z_{\KS}\arrow{d}\\
\mathcal{A}_1 \arrow[hookrightarrow]{r}& \mathcal{A}_{d/2}&  E^{d/2}\\
\end{tikzcd}
\]
From the calculations in the previous section and with the above discussion, we deduce the following result about the Raynaud extensions of the Kuga-Satake abelian varieties in characteristic $p$. 
\begin{cor}\label{prodec}
Let $X$ be a $K$-point of an orthogonal Shimura variety and let $\KS(X)$ be the associated Kuga-Satake abelian variety of dimension $d$.
\begin{enumerate}
\item Suppose $X$ has type II reduction. Then $\KS(X)$ has semi-abelian reduction. The Raynaud extension $Z_{\KS}$ of the Kuga-Satake abelian variety over $K$ is an extension
$$0\rightarrow \bbG_m^{d/2}\rightarrow Z_{\KS}\rightarrow B\rightarrow 0$$
where $B$ is isogenous to a product of $d/2$ copies of an elliptic curve $E$ over $K$.
\item Suppose $X$ has type III reduction. Then the associated Kuga-Satake abelian variety has totally bad reduction, that is, it degenerates into a torus.  
\end{enumerate}
\end{cor}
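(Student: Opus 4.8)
The plan is to deduce the corollary by combining the weight-filtration computations already carried out in this section over $\C$ with the descent of the universal Raynaud extension to characteristic $p$. The Hodge-theoretic input is essentially in hand. For type II reduction the relevant parabolic $P_0$ stabilises the two-dimensional isotropic subspace $I = \langle e_1, e_2\rangle$; the preceding proposition identifies the weight-one graded piece $\gr_1(H) = H_{-1} = \langle e_1, e_2\rangle \otimes Cl(I_0\oplus I_1)$ as a direct sum of copies of the weight-one Hodge structure on $I$, and the induced polarisation exhibits it as (isogenous to) a product of $d/2$ copies of an elliptic curve, where $2d = \dim H$. The weight $-2$ piece $W_{-2}H = \im(e_1e_2)$ governs the toric part, and Lemma \ref{dimiso} computes its dimension to be $d/2$, which is exactly the toric rank. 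For type III reduction the parabolic stabilises a one-dimensional isotropic line, so $W_{-1}H = W_{-2}H = \im(e_1)$ is its own perpendicular, $\gr_1(H)$ vanishes, and the toric part has full rank $d$. Thus over $\C$ the abelian quotient $B$ is $E^{d/2}$ in the first case and trivial in the second.

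Next I would transport these statements to characteristic $p$. Since $X$ is a $K$-point of the orthogonal Shimura variety reducing to a boundary stratum, its image under the Kuga-Satake embedding $\iota$ determines, via the unique minimal parabolic $P'\subset \GSp$ containing $\iota(P)$, a boundary stratum of the Siegel space; hence $\KS(X)$ is a $K$-point of the formal completion $\widehat{\mathcal{A}_{g,K^\dagger}^{\mathrm{tor}}}$ along that stratum. By the integral toroidal compactifications of \cite{keerthitor}, the universal Raynaud extension $\mathcal{Z}$ exists as a semi-abelian scheme with lattice over the canonical integral model of the boundary, and its mod-$p$ reduction $\mathcal{Z}_{\F_p}$ pulls back over this $K$-point to $(Z_{\KS}, M)$ with $\KS(X)\simeq Z_{\KS}/M$. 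Because the boundary of $\mathcal{A}_g$ parametrises semi-abelian schemes, this already shows that $\KS(X)$ has semi-abelian reduction in the type II case and totally toric reduction in the type III case, with torus rank $\dim W_{-2}H$ equal to $d/2$, respectively $d$, giving the $\bbG_m^{d/2}$ in the statement.

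Finally I would invoke the commutativity over $\Z_p$ of the diagram relating the $\GSpin$ and Siegel boundary towers displayed above. This records that the abelian-quotient map factors through the embedding $\mathcal{A}_1\hookrightarrow \mathcal{A}_{d/2}$, $E\mapsto E^{d/2}$, so that the integral and hence mod-$p$ abelian quotient $B$ is forced to agree with the characteristic-zero computation: a product isogenous to $E^{d/2}$ in the type II case and trivial in the type III case. Combining the toric rank from the previous step with this identification of $B$ yields both parts of the corollary.

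The step I expect to be the main obstacle is the second one, namely verifying that the characteristic-zero Hodge-theoretic description of the boundary genuinely computes the Raynaud extension of $\KS(X)$ in characteristic $p$. This requires that the Kuga-Satake embedding extend compatibly to the integral canonical models of the toroidal compactifications on both sides, and that the universal Raynaud extension, defined over $\Z_{(p)}$, commute with the boundary maps of the tower. The precise content is the commutativity of the displayed diagram over $\Z_p$, which rests on the functoriality and integrality results of \cite{keerthitor} and \cite{spinsh}; everything else is a formal consequence of the linear-algebra computation of $W_{\bullet}H$ already completed.
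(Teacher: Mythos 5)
Your proposal matches the paper's own argument: the corollary is deduced exactly as you describe, by combining Lemma \ref{dimiso} and the computation of $\gr_1(H)$ for the two types of parabolics with the pullback of the universal Raynaud extension $\mathcal{Z}_{\F_p}$ over the $K$-point and the commutativity over $\Z_p$ of the displayed diagram of boundary towers. You have also correctly isolated the genuine technical content, namely the compatibility of the Kuga--Satake embedding with the integral toroidal compactifications of \cite{keerthitor}, which is precisely what the paper relies on.
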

\section{Monodromy of the Kuga-Satake abelian variety}\label{10}
In this section we compute the monodromy of the Kuga-Satake abelian variety with semi-stable reduction. The proof is similar to that done in Section \ref{6} for abelian surfaces. Consider $X$ to be an ordinary $K$-point of an orthogonal Shimura variety with bad reduction. Moreover we assume that $\KS(X)$ does not have an isogeny factory of an elliptic curve with semi-stable reduction, in which case we can argue separately by considering the Galois action on the torsion of the two factors separately. 
\subsubsection{When $\KS(X)$ has semi-abelian reduction:}
We know that the Raynaud extension of $\KS(X)$ up to a finite extension fits in an exact sequence as follows:
$$0\rightarrow \mathbb{G}_m^{d/2}\rightarrow Z_{\KS}\rightarrow B\rightarrow 0$$
where $B$ is isogenous to $E^{d/2}$ for some elliptic curve $E$. By Lemma \ref{ordks}, since $\KS(X)$ is ordinary, the elliptic curve $E$ has to be ordinary as well. Furthermore, $\KS(X) \simeq Z_{\KS}/M$ where $M$ is a lattice of rank $d/2$
 in $Z_{\KS}$. Suppose that the lattice is generated by $\langle{m_1,m_2,\dots , m_{d/2}}\rangle$. Then the bases of the $p^n$-torsion of $\KS(X)$ is given by  $y_1^{(n)},y_2^{(n)}, \dots, y_{d/2}^{(n)}$ that generate $Z_{\KS}[p^n](\overline K)$ and points $z^{(n)}_1,\dots, z^{(n)}_{d/2}$ such that $p^n z^{(n)}_i = m_i$ for $1\leq i\leq d/2.$ 

\begin{lem}\label{injtor}
The bases elements $y_1^{(n)},y_2^{(n)}, \dots, y_{d/2}^{(n)}$ of $Z_{\KS}[p^n](\overline K)$ map to the bases of the $p$-power torsion of $E^{d/2}$ for all $n\geq 1$ under the morphism $Z_{\KS}\rightarrow E^{d/2}$. Moreover, the map is injective and Galois invariant on the entire $p$-power torsion of $Z_{\KS}$.
\end{lem}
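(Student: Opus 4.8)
\emph{The plan} is to read off the $p$-power torsion of $Z_{\KS}$ directly from the defining extension
$$0\rightarrow \bbG_m^{d/2}\rightarrow Z_{\KS}\xrightarrow{\ \varphi\ } B\rightarrow 0,$$
exploiting the fact that in characteristic $p$ the torus contributes no torsion points. First I would record that $\bbG_m[p^n](\overline{K}) = \mu_{p^n}(\overline{K})$ is trivial: over a field of characteristic $p$ one has $x^{p^n}-1 = (x-1)^{p^n}$, so the only $p^n$-th root of unity is $1$. Hence $\bbG_m^{d/2}$ carries no nontrivial $p$-power torsion over $\overline{K}$, exactly as in the one-dimensional situation treated in Section \ref{6}.

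With this in hand the injectivity is immediate. Since $\overline{K}$ is algebraically closed, Hilbert 90 gives $H^1(\overline{K},\bbG_m^{d/2}) = 0$, so the sequence of $\overline{K}$-points is short exact; applying the snake lemma to multiplication by $p^n$ yields
$$0\rightarrow \bbG_m^{d/2}[p^n](\overline{K})\rightarrow Z_{\KS}[p^n](\overline{K})\rightarrow B[p^n](\overline{K})\rightarrow \bbG_m^{d/2}(\overline{K})/p^n\bbG_m^{d/2}(\overline{K}).$$
The left-hand term vanishes by the previous paragraph, so $\varphi$ is injective on $p^n$-torsion; and the cokernel $\overline{K}^{\times}/(\overline{K}^{\times})^{p^n}$ is trivial because $\overline{K}$ is algebraically closed, so $\varphi$ induces an isomorphism $Z_{\KS}[p^n](\overline{K})\xrightarrow{\ \sim\ } B[p^n](\overline{K})$ for every $n$. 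In particular $Z_{\KS}[p^n](\overline{K})$ is free of rank $d/2$ over $\Z/p^n\Z$, generated by $y_1^{(n)},\dots,y_{d/2}^{(n)}$, and the injectivity holds on the full $p$-power torsion.

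Next I would transport this across the isogeny. By Lemma \ref{ordks} the variety $\KS(X)$, and hence its abelian quotient $B$, is ordinary, so $B$ is an ordinary abelian variety of dimension $d/2$ and $B[p^n](\overline{K})\cong(\Z/p^n\Z)^{d/2}$ is the full \'etale part of its $p^n$-torsion. The isogeny $B\to E^{d/2}$ of Corollary \ref{prodec} restricts to an injection of the \'etale $p$-divisible groups over $\overline{K}$, identifying the two torsion bases; composing with $\varphi$ therefore sends $y_1^{(n)},\dots,y_{d/2}^{(n)}$ to a basis of $(E^{d/2})[p^n](\overline{K})$. Galois invariance is then automatic: the Raynaud extension, the quotient $\varphi$, and the isogeny $B\to E^{d/2}$ are all defined over $K$, so the composite $Z_{\KS}\to E^{d/2}$ commutes with the action of $\Gal(K^{\sep}/K)$ on $\overline{K}$-points and carries the Galois-stable group $Z_{\KS}[p^n](\overline{K})$ equivariantly into $(E^{d/2})[p^n](\overline{K})$.

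The step requiring the most care, and the only genuinely characteristic-$p$ input, is the vanishing of $\bbG_m^{d/2}[p^n](\overline{K})$: in characteristic zero the torus would carry full $p$-power torsion and the entire dichotomy would collapse. The one remaining point to handle cleanly is the isogeny $B\to E^{d/2}$, whose degree may a priori be divisible by $p$; I would either replace it by a prime-to-$p$ isogeny or simply observe that a $p$-power factor alters the image only by a finite index, which is immaterial for the monodromy computation that follows.
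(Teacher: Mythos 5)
Your argument is correct and takes essentially the same route as the paper: both proofs rest on the single characteristic-$p$ fact that $\bbG_m^{d/2}[p^n](\overline{K})=0$, so the kernel of $\varphi$ meets $Z_{\KS}[p^n](\overline{K})$ trivially; your snake-lemma packaging together with the divisibility of $\overline{K}^{\times}$ merely makes explicit the isomorphism onto $B[p^n](\overline{K})$ that the paper obtains by an injectivity-plus-rank count. Note only that the paper's proof of this lemma additionally shows, using the standing assumption that $\KS(X)$ has no Tate-curve isogeny factor, that the images $\varphi(z_i^{(n)})$ of the lattice lifts are independent — a point absent from the statement but needed in the subsequent ramification arguments — and that, where you flag the isogeny $B\to E^{d/2}$ as needing care, the paper simply identifies the two.
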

 
\begin{proof}
We denote the map $Z_{\KS}\rightarrow E^{d/2}$ by $\varphi$. We prove this for $n=1$ and denote $y_i^{(1)} =y_i$ for the ease of notation. The proof for $n>1$ is the same. Suppose that $\sum_{i=1}^{d/2}a_i\varphi ( y_i) = 0$ where $a_i\in\Z/p\Z$ are not all zero. Then $\sum_{i=1}^{d/2}a_iy_i$ lies in $\mathbb{G}_{m}^{d/2}(\overline{K})$ is a $p$-torsion element. Since $\mathbb{G}_{m}^{d/2}[p](\overline{K})=0$, the sum 
$\sum_{i=1}^{d/2} a_iy_i=0$, but since $y_i$ form a bases of $Z_{\KS}[p](\overline{K})$, $a_i =0$ for all $1\leq i\leq d/2.$ This proves that the set of elements $\{\varphi(y_i)\}_{i=1}^{d/2}$  form a bases of the $p$-torsion of $E^{d/2}$.  We prove the rest of the statement for $n=1$ as well and write $z^{(1)}_i = z_i$. Suppose $\sum_{i=1}^{d/2}a_i\varphi(z_i) =0$ such that not all $a_i\in \Z/p\Z$ are zero.  Then $p \sum_{i=1}^{d/2}a_i\varphi(z_i) =0$. That is $\sum_{i=1}^{d/2} a_im_i \in M$ lies in $\bbG_m^{d/2}(\overline{K})$. But then we will have a Tate curve in $Z_{\KS}/M\simeq \KS(X)$, contrary to our assumption. %We can always embed $\G_m/q\hookrightarrow \G_m^{}/m$ where m is any element of the lattice by just mapping \G_m diagonally and mapping q to m.  
 \end{proof}
It is enough to understand the action on the images of the torsion under $\varphi$. The Galois action on the $p^n$-torsion is checked similarly to the case of abelian surfaces and is summarized in the following result.

\begin{lem}\label{galactionks}
 Let $K^{\prime}$ denote the extension of $K$ generated by attaching the $p$-power torsion of $Z_{\KS}(\overline{K})$. Then $Z_{\KS}[p^n](\overline{K})$ is preserved by the Galois group. Moreover, for all $1\leq i\leq d/2$ and $n\geq 1$, we have $\sigma (z^{(n)}_i) - z^{(n)}_i \in Z_{\KS}[p^n](\overline{K})$.  Therefore, the Galois group $\Gal (K^{\prime}(z^{(n)}_1,\dots, z^{(n)}_{d/2} )/K^{\prime})$ is a subgroup of $(\Z/p^n\Z)^{d/2}$.
\end{lem}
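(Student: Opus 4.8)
The proof is the natural extension of the two structural lemmas for abelian surfaces in Section~\ref{6}, handling the $d/2$ lattice generators $m_1,\dots,m_{d/2}$ one coordinate at a time. I would work over the finite extension of $K$ fixed in the setup of this section, so that the sequence $0\to\bbG_m^{d/2}\to Z_{\KS}\to B\to 0$, the isomorphism $\KS(X)\simeq Z_{\KS}/M$, and the lattice $M=\langle m_1,\dots,m_{d/2}\rangle$ all descend to $K$. In particular $[p^n]\colon Z_{\KS}\to Z_{\KS}$ is a $K$-morphism and each $m_i$ is a $K$-rational, hence $\Gal(K^{\sep}/K)$-fixed, point.

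First I would record Galois stability of the torsion: for $w\in Z_{\KS}[p^n](\overline K)$ and any $\sigma$, the relation $p^n\sigma(w)=\sigma(p^nw)=\sigma(e)=e$ forces $\sigma(w)\in Z_{\KS}[p^n](\overline K)$, since $\sigma$ acts by automorphisms commuting with the $K$-morphism $[p^n]$. For the coboundary containment I would compute, for each $i$,
\[
p^n\bigl(\sigma(z_i^{(n)})-z_i^{(n)}\bigr)=\sigma(p^nz_i^{(n)})-p^nz_i^{(n)}=\sigma(m_i)-m_i=0,
\]
using $p^nz_i^{(n)}=m_i$ and the $K$-rationality of $m_i$; hence $\sigma(z_i^{(n)})-z_i^{(n)}\in Z_{\KS}[p^n](\overline K)$.

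To finish, note that over $K'$ the whole group $Z_{\KS}[p^n](\overline K)$ is rational by definition, so every $\sigma\in\Gal(K'(z_1^{(n)},\dots,z_{d/2}^{(n)})/K')$ fixes it pointwise. I would then consider $\sigma\mapsto(\sigma(z_i^{(n)})-z_i^{(n)})_{i}$ and post-compose with $\varphi$ to land in $\bigoplus_{i}B[p^n](\overline K)$. This is a homomorphism: for $\sigma,\tau$ one writes $\sigma\tau(z_i^{(n)})-z_i^{(n)}=(\sigma(z_i^{(n)})-z_i^{(n)})+\sigma\bigl(\tau(z_i^{(n)})-z_i^{(n)}\bigr)$ and the second term is fixed by $\sigma$ because it lies in $Z_{\KS}[p^n](\overline K)\subset Z_{\KS}(K')$. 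It is injective because, in characteristic $p$, $\ker\varphi\cap Z_{\KS}[p^n](\overline K)=\bbG_m^{d/2}[p^n](\overline K)=0$, so $\varphi(\sigma(z_i^{(n)})-z_i^{(n)})=0$ for all $i$ already forces $\sigma$ to fix every $z_i^{(n)}$. Using the product decomposition $B\sim E^{d/2}$ of Corollary~\ref{prodec} together with ordinarity of $E$ (Lemma~\ref{ordks}), the generator $m_i$ maps into the $i$-th elliptic factor, so the $i$-th coordinate of the cocycle lands in $E[p^n](\overline K)\cong\Z/p^n\Z$; this exhibits the Galois group as a subgroup of $(\Z/p^n\Z)^{d/2}$.

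The argument is essentially formal once Lemma~\ref{injtor} is available. The one point genuinely needing care — and the step I expect to be the main obstacle — is the standing descent: verifying that after the finite base change the lattice $M$ is generated by $K$-rational points (so that $\sigma(m_i)=m_i$) and that $m_i$ is compatibly supported on the $i$-th factor of $B\sim E^{d/2}$, which is what pins the rank down to $d/2$ rather than to a free $\Z/p^n\Z$-module of larger rank.
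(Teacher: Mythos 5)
Your first two paragraphs reproduce what the paper actually does: the paper offers no separate proof of this lemma, deferring to the abelian surface case, where the entire argument is the computation $p^n\bigl(\sigma(z^{(n)})-z^{(n)}\bigr)=\lambda-\lambda=0$. Your version of this (Galois stability of $Z_{\KS}[p^n](\overline K)$ because $[p^n]$ is a $K$-morphism, and $p^n\bigl(\sigma(z_i^{(n)})-z_i^{(n)}\bigr)=\sigma(m_i)-m_i=0$ using $K$-rationality of the lattice generators after the standing finite base change) is correct and in fact more careful than the source, as is your verification that $\sigma\mapsto\bigl(\sigma(z_i^{(n)})-z_i^{(n)}\bigr)_i$ is an injective homomorphism on $\Gal\bigl(K'(z_1^{(n)},\dots,z_{d/2}^{(n)})/K'\bigr)$ once the $p^n$-torsion is $K'$-rational.

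The gap is in your last step, where you try to pin the image down to $(\Z/p^n\Z)^{d/2}$. What the coboundary argument actually gives is an embedding of the Galois group into $\prod_{i=1}^{d/2}Z_{\KS}[p^n](\overline K)\cong\bigl((\Z/p^n\Z)^{d/2}\bigr)^{d/2}$: the constraint $p^n\bigl(\sigma(z_i^{(n)})-z_i^{(n)}\bigr)=0$ places the difference in the full torsion group $Z_{\KS}[p^n](\overline K)\cong(\Z/p^n\Z)^{d/2}$ and says nothing about which coordinate of that group it occupies. Your claim that ``$m_i$ maps into the $i$-th elliptic factor, so the $i$-th coordinate of the cocycle lands in $E[p^n](\overline K)$'' conflates two different things: even if the lattice generators were adapted to the isogeny decomposition $B\sim E^{d/2}$ of Corollary \ref{prodec} (which is asserted nowhere and is not automatic, since $M$ and the splitting of $B$ come from unrelated data), the element $\sigma(z_i^{(n)})-z_i^{(n)}$ is an arbitrary $p^n$-torsion point and need not lie over the $i$-th factor. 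Note that the bound $(\Z/p^n\Z)^{(d/2)^2}$ is what the paper itself implicitly uses immediately afterwards: the unipotent block $W$ in the matrix description of $\sigma$ is a full $d/2\times d/2$ matrix, and the later ramification arguments only need that the group is abelian of exponent $p^n$ acting trivially on the quotient by $Z_{\KS}[p^n]$. So either the lemma's ``$(\Z/p^n\Z)^{d/2}$'' should be read as $(\Z/p^n\Z)^{(d/2)^2}$, or an additional argument is required; your proposal supplies neither, and the justification you give for the sharper rank would fail.
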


 Lemma \ref{galactionks} implies that any element $\sigma$ of $\Gal(K^{\sep}/K)$ acts as the following automorphism on the bases elements of the $p$-power torsion of $A$ as follows:
\[
  \sigma = \left[\begin{array}{ c | c }
   D & W \\
    \hline
    0 & I
  \end{array}\right]
\]
where each block is a $d/2\times d/2$ matrix. The block matrix $I$ is the identity. The block $D$ is a diagonal matrix that determines the action on (the image of) $p^n$-torsion of $Z_{\KS}(\overline K)$, while the right half of the matrix comes from the action on (the image of) $z_i^{(n)}.$ 
 \subsubsection{The case when $B$ has ordinary reduction}\label{ordredks} When $B\simeq E^{d/2}$ has ordinary reduction, then the reduction map $\mathcal{E}^{d/2}(\overline{R})\rightarrow E^{d/2}_0(\overline{\F_q})$ is injective on the $p^n$-torsion of $\mathcal{E}^{d/2}(\overline{R})$ where $\mathcal{E}$ is the N\'eron model of $E$. Hence $K^{\prime}/K$ is an unramified extension. Moreover we claim that the field extension obtained by attaching $z_i^{(n)}$ to $K^{\prime}$, is a ramified extension. Let $K(z^{(n)}_i, y_i^{(n)})$ be the field extension of $K$ obtained by adjoining $y_i^{(n)}$ and $z_i^{(n)}$ for all $1\leq i\leq d/2$. We prove this by showing that the maximal abelian quotient of $\Gal(K(z^{(n)}_i, y_i^{(n)})/K)$ is $\Gal(K^{\prime}/K).$ An argument same as that of Lemma \ref{totram} gives us a contradiction as follows: if $K^{\prime}(z^{(n)}_i)/K^{\prime}$ were unramified, then $K(z^{(n)}_i)$ would be an abelian extension of $K$. The composite of $K^{\prime}$ and $K(z^{(n)}_i)$ would be an abelian extension of $K$ strictly containing $K^{\prime}$, a contradiction. In order to find the maximal abelian extension of $K,$ we compute the commutator of $\Gal(K(z^{(n)}_i,y^{(n)}_i)/K)$ explicitly. From the description of the elements of the Galois group given above, this turns out to be a $d\times d$ block matrix
\[
  \sigma = \left[\begin{array}{ c | c }
    I & W \\
    \hline
    0 & I
  \end{array}\right]
\]
which is exactly equal to $\Gal(K^{\prime}(z^{(n)}_i)/K^{\prime})$.  This completes the proof of the fact that the action of the inertia is unipotent.

  \subsubsection{The case when $B$ has supersingular reduction} We prove an analogue to Igusa's result in this setting.

 \begin{prop}

Consider a sequence of points  of $Z_{\KS}(\overline{K})$: 
$$y^{(n)}_1, y^{(n)}_2, y^{(n)}_3\dots \text{ and } z^{(n)}_1, z^{(n)}_2, z^{(n)}_3 \dots $$ such that $py^{(1)}_i = e$, $py^{(n)}_i = y^{(n-1)}_i$, $pz^{(1)}_i = m_i$ and  $pz^{(n)}_i = z^{(n-1)}_i$  for $1\leq i\leq d/2$ and $n\geq 1$. Then there exists some $n_0$ such that for all $n> n_0$ the field extension  $K(z^{(n)}_i,y^{(n)}_i)$ is a totally ramified extension of $K(z^{(n_0)}_i,y^{(n_0)}_i)$

\end{prop}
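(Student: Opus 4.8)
The plan is to mirror the proof of Proposition \ref{semiabelian}, reducing the statement to the single ordinary elliptic curve $E$ with supersingular reduction and then invoking Igusa's Theorem \ref{ss} together with Lemma \ref{totram} coordinate by coordinate. By Lemma \ref{injtor} the map $\varphi : Z_{\KS} \to E^{d/2}$ is injective and Galois-equivariant on all of the $p$-power torsion, so the fields cut out by the $y_i^{(n)}$ and $z_i^{(n)}$ coincide with those cut out by their images $\varphi(y_i^{(n)})$ and $\varphi(z_i^{(n)})$ in $E^{d/2}$. It therefore suffices to analyse the ramification of these images inside $E^{d/2}$.

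First I would treat the $y$-part. The points $\varphi(y_i^{(n)})$ form a basis of $E^{d/2}[p^n](\overline K)$; since, up to the fixed isogeny of Corollary \ref{prodec}, every factor is the \emph{same} ordinary elliptic curve $E$ with supersingular reduction, the field $K(\{\varphi(y_i^{(n)})\}_i)$ agrees with $K(E[p^n])$. Igusa's Theorem \ref{ss}, applied to $E$, then produces an $n_0$ so that $K(E[p^n])/K(E[p^{n_0}])$ is totally ramified for all $n > n_0$; after enlarging $n_0$ to absorb the bounded defect introduced by the isogeny, this single $n_0$ works simultaneously for all coordinates. Next I would treat the $z$-part: for each $i$ the element $\varphi(m_i)$ is a fixed $K$-rational point of $E^{d/2}$ and $\varphi(z_i^{(n)})$ is a compatible system of its $p^n$-divisions, so projecting to a coordinate and applying Lemma \ref{totram} shows that $K(y_i^{(n)}, z_i^{(n)})/K(y_i^{(n)})$ is totally ramified for $n > n_0$.

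Finally I would assemble these into the tower
\[ K(\{y_i^{(n_0)}, z_i^{(n_0)}\}) \subset K(\{y_i^{(n)}\}_i, \{z_i^{(n_0)}\}_i) \subset K(\{y_i^{(n)}, z_i^{(n)}\}), \]
where the first inclusion is totally ramified by the $y$-part and the remaining steps adjoin the $z_i^{(n)}$. The main obstacle lies precisely here: a compositum of totally ramified extensions need not be totally ramified, so the coordinatewise conclusions above do not combine formally. To close this gap I would show that the maximal unramified subextension of $K(\{y_i^{(n)}, z_i^{(n)}\})$ over $K(\{y_i^{(n_0)}, z_i^{(n_0)}\})$ is trivial: any such unramified piece would be abelian over the base, whereas by Lemma \ref{galactionks} the Galois group acts through matrices of the form $\left[\begin{smallmatrix} D & W \\ 0 & I \end{smallmatrix}\right]$ whose commutators, computed exactly as in Section \ref{ordredks}, fill out the entire $W$-block. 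Hence the relevant quotient is non-abelian and cannot be unramified, which is the same non-abelianness argument already used in Lemma \ref{totram}. This forces the maximal unramified subextension to be trivial and identifies the compositum with a totally ramified extension, completing the proof.
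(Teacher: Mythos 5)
Your proof follows essentially the same route as the paper's: reduce via $\varphi$ to $E^{d/2}$ using Lemma \ref{injtor}, handle the $y$-part by Igusa's Theorem \ref{ss}, and dispose of the $z$-part with the commutator/non-abelianness argument of Section \ref{ordredks}. The only difference is that you make explicit the issue that a compositum of totally ramified extensions need not be totally ramified --- a point the paper resolves implicitly by the same appeal to Section \ref{ordredks} --- and your resolution of it is correct.
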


 \begin{proof}
It is enough to prove this for the images of the sequence of points via the map $\varphi: Z_{\KS}\rightarrow E^{d/2}$. We proceed similarly as in the case of abelian surfaces. The elliptic curve $E$ over $K$ is ordinary and its special fibre is supersingular. Hence  
by Theorem \ref{ss}, we see that there exists some $n_0$ such that the field generated by attaching the $p^n$-torsion of $E^{d/2}$ denoted by $K(y^{(n)}_i) = K^{\prime}$ is a totally ramified extension of $K(y^{(n_0)}_i)$ for all $n> n_0.$ It remains to show that once we further adjoin $z^{(n)}_i$ to $K^{\prime}$ for $1\leq i\leq d/2$ then, we get a totally ramified extension. This argument is the same as done in Section \ref{ordredks}.
\end{proof}
\subsubsection{Totally bad reduction}
In the case when the Kuga-Satake abelian variety has totally bad reduction, we follow the same argument in Section \ref{badred} for the monodromy representation associated to totally degenerating abelian varieties. 
\section{Monodromy result for orthogonal Shimura varieties in characteristic $p$}\label{k3}
We summarise the calculations for the Kuga-Satake abelian variety in the previous section along with the monodromy result for K3 surfaces in the following main result of our paper.  
\begin{thm}\label{Gspinmainthm}
 Let $X$ be an ordinary $K$-point of the orthogonal Shimura variety associated to the quadratic lattice of signature $(n,2)$.  Let that $\KS(X)$ be its Kuga-Satake abelian variety of dimension $d$. Let $$\rho_{X}: \Gal(K^{\sep}/K)\rightarrow \SO_{n}(\Z_p)$$
$$\rho_{\KS(X)}: \Gal(K^{\sep}/K)\rightarrow \GL_d(\Z_p)$$
 be the associated monodromy representation of $X$ and $\KS(X)$ respectively.
\begin{enumerate}
\item Suppose $X$ has type II reduction such that KS(X) has semi-abelian reduction. 
\begin{enumerate}

\item If $X$ has type II ordinary reduction then the inertia subgroup has unipotent image under $\rho_X$ and $\rho_{\KS}$.
\item If $X$ has type II supersingular then the image of the inertia subgroup has finite index in the entire image of the Galois group $\rho_X$ and $\rho_{\KS}$.
\end{enumerate}
\item Suppose that $X$ has type III reduction, then the associated Galois representations have trivial image.
\end{enumerate}

\end{thm}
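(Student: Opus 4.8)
The plan is to reduce the entire statement to the monodromy of the Kuga--Satake abelian variety computed in Section \ref{10}, and then transport those conclusions to $\rho_X$ through the Kuga--Satake construction. The starting point is that $\rho_{\KS}$ is a lift of $\rho_X$: the spin (Clifford) representation embeds $\GSpin \hookrightarrow \GL_d$, so that $\rho_{\KS}$ factors through $\GSpin(\Z_p)$, while the $p$-adic realization of the primitive crystal $V$ sits inside the endomorphisms of the unit root crystal of $\KS(X)$, with the $\GSpin$-action inducing the $\SO$-action. Writing $\pi:\GSpin\to\SO_n$ for the standard projection (a central isogeny with kernel $\bbG_m$), one thus has $\rho_X = \pi\circ\rho_{\KS}$, up to the finite adjustments coming from $X$ being a point of the orthogonal rather than the $\GSpin$ Shimura variety. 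Since $\pi$ is a continuous homomorphism on $\Z_p$-points, it sends unipotent elements to unipotent elements and carries finite-index subgroups (resp. the trivial subgroup) of $\im\rho_{\KS}$ onto finite-index subgroups (resp. the trivial subgroup) of $\im\rho_X$. This transfer principle lets me read off all three conclusions for $\rho_X$ once they are established for $\rho_{\KS}$.

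First I would fix the reduction type of $\KS(X)$ using Corollary \ref{prodec}. For type II reduction, $\KS(X)$ has semi-abelian reduction and its Raynaud extension fits in $0\to\bbG_m^{d/2}\to Z_{\KS}\to B\to 0$ with $B$ isogenous to $E^{d/2}$; by Lemma \ref{ordks} the elliptic curve $E$ is ordinary, and type II ordinary (resp. supersingular) reduction corresponds exactly to $E$ having ordinary (resp. supersingular) reduction. For type III reduction, $\KS(X)$ degenerates totally into a torus.

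Next I would invoke the three case analyses already carried out in Section \ref{10}. In the type II ordinary case (Section \ref{ordredks}), the images of the torsion generators $y^{(n)}_i$ lie in the ordinary $E^{d/2}$ and hence generate an unramified extension, while the $z^{(n)}_i$ contribute only the upper-triangular unipotent blocks $\left[\begin{smallmatrix} I & W \\ 0 & I\end{smallmatrix}\right]$; so the inertia image of $\rho_{\KS}$ is unipotent, and by the transfer principle so is that of $\rho_X$. In the type II supersingular case, Theorem \ref{ss} together with the Igusa-type proposition and Lemma \ref{totram} shows that $K(z^{(n)}_i,y^{(n)}_i)/K(z^{(n_0)}_i,y^{(n_0)}_i)$ is totally ramified for all $n>n_0$; hence the unramified quotient of $\im\rho_{\KS}$ is finite, i.e. the inertia image has finite index, and this passes to $\rho_X$. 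In the type III case the totally-bad-reduction computation of Section \ref{badred} gives $\rho_{\KS}$ trivial image, whence $\rho_X$ has trivial image.

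The main obstacle I anticipate is making the identification $\rho_X=\pi\circ\rho_{\KS}$ precise at the integral level and controlling the finite \'etale passage between the $\GSpin$ and orthogonal Shimura varieties, which may force a finite extension of $K$ in order to lift $X$ to a $\GSpin$-point. One has to check that these finite modifications do not destroy the conclusions: unipotence of inertia and finite index of the inertia image are both insensitive to finite separable base change, and triviality of the image is only weakened to finiteness, which is harmless for the stated assertions. The genuinely delicate point is the finite-index claim in the supersingular case, where one must verify that the totally ramified tower produced by the Igusa argument indeed bounds the unramified part of $\im\rho_{\KS}$, and that this bound survives projection under $\pi$ to $\SO_n(\Z_p)$.
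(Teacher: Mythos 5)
Your proposal is correct and follows essentially the same route as the paper: the paper's proof is simply the observation that the image of $\rho_X$ is a quotient of the image of $\rho_{\KS(X)}$, combined with the case-by-case computations for $\KS(X)$ already carried out in Section \ref{10}. Your write-up is more explicit about the transfer mechanism (the central isogeny $\GSpin\to\SO_n$ and the insensitivity of unipotence, finite index, and triviality to finite base change), but the underlying argument is the same.
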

\begin{proof}

The monodromy group (image of $\rho_X)$ of the K3 surface is a quotient of the monodromy of the Kuga-Satake abelian variety. From the calculation in the previous section, we get the monodromy results for $\KS(X)$ and hence, for $X$  as well.
\end{proof}

\begin{cor}[Reduction of the Hecke-orbit]\label{finitehecke}
Let $X$ be an ordinary K3 surface with type II supersingular reduction and $\KS(X)$ be its associated Kuga-Satake abelian variety. Then the reduction of their $p$-power Hecke orbit of $X$ is finite.
\end{cor}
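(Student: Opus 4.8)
The plan is to follow \cite{KLSS}, Corollary 2.10, transferring the problem through the Kuga--Satake map to the abelian variety $\KS(X)$ and ultimately to the supersingular-reducing elliptic curve $E$ furnished by Corollary \ref{prodec}. I would first reduce to $\KS(X)$: the Kuga--Satake construction is functorial, compatible with the canonical integral models, and compatible with the Hecke action (the embedding $\iota$ intertwines the $\GSpin$-Hecke correspondences with the symplectic ones), so the $p$-power Hecke orbit of $X$ maps to that of $\KS(X)$ with finite fibres and the reduction map commutes with this. Hence it suffices to show that the reduction of the $p$-power Hecke orbit of $\KS(X)$ is finite.

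The next step is to confine the reductions to a single Newton stratum. A point $Y$ in the $p$-power Hecke orbit of $\KS(X)$ is a $p$-power isogeny of $\KS(X)$, and such an isogeny preserves the isogeny class of the $p$-divisible group, hence the Newton polygon of the reduction. Therefore every $Y$ reduces to the type II supersingular boundary stratum, and by Corollary \ref{prodec} the abelian part of its Raynaud extension is isogenous to $E^{d/2}$ for a single elliptic curve $E$ whose special fibre $E_0$ is supersingular. Since $Y$ ranges over the orthogonal Hecke orbit, the product structure $E_0^{d/2}$ and its image under $\mathcal{A}_1 \hookrightarrow \mathcal{A}_{d/2}$ are preserved, so the Baily--Borel boundary point attached to $Y$ is determined by the supersingular elliptic curve $E_0$. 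As the supersingular locus of $\mathcal{A}_1 \otimes \overline{\F}_q$ is finite, only finitely many such boundary points occur.

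It remains to control the finer reduction datum beyond the abelian quotient, namely the toric part and the lattice $M$ recording the Raynaud extension; this is where the large monodromy of Theorem \ref{Gspinmainthm}(1b) enters, exactly as in \cite{KLSS}. Two Hecke translates with the same reduction differ by a quasi-isogeny that is an isomorphism on special fibres, i.e.\ by the unramified (Frobenius) part of the action on the $p$-power torsion, and the statement that the image of inertia has finite index in the full image of $\rho_{\KS}$ bounds the number of such unramified configurations. Concretely, the ramification established in Theorem \ref{ss} (applied to $E$) forces all but a bounded amount of the torsion data to generate totally ramified extensions, so the unramified data seen by the reduction takes only finitely many values; combined with the finiteness of the supersingular locus this yields finiteness of the set of reductions, and the conclusion for $X$ follows from that for $\KS(X)$ via the first step.

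I expect the main obstacle to be this last step: making precise the assertion that the reduction of a Hecke translate depends only on the unramified part of the torsion data, i.e.\ cleanly separating the rigid ramified directions---pinned down by the large monodromy---from the finitely-many-valued unramified directions, and verifying that the toric and lattice contributions introduce no additional moduli in characteristic $p$. This requires identifying the reduction map on the Hecke orbit with the functor recording the unit-root part of the associated $F$-crystal, and leaning on the rigidity of tori together with the finiteness of the supersingular locus of $\mathcal{A}_1$.
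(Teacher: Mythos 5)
Your first step (transferring the problem to $\KS(X)$ via compatibility of the Hecke action with the Kuga--Satake construction) matches the paper. After that your argument has a genuine gap, and you have in fact flagged it yourself. Pinning down the Baily--Borel boundary point via the finiteness of the supersingular locus of $\mathcal{A}_1$ only controls the abelian quotient of the Raynaud extension; the actual reduction of a Hecke translate is a point of the \emph{toroidal} boundary, which also records the toric part, the extension class of $Z_{\KS}$, and the period lattice $M$, and these live in positive-dimensional moduli. Your third paragraph, which is supposed to handle exactly this, never becomes an argument: the claim that ``two Hecke translates with the same reduction differ by the unramified part of the torsion data'' and that the finite-index-of-inertia statement ``bounds the number of such unramified configurations'' is the entire content of the corollary, and it is left as an acknowledged obstacle rather than proved. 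As written, the proposal does not establish finiteness.

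The paper closes this gap with a much more direct use of Theorem \ref{Gspinmainthm}(1b), and never needs the Newton-stratum or supersingular-locus considerations at all. Since the image of inertia has finite index in the image of $\rho_{\KS(X)}$, the maximal unramified subextension of the field generated by the $p$-power torsion of $\KS(X)$ is finite over $K$; replacing $K$ by this finite extension, every $p$-power torsion point, hence every $p$-power isogeny (its kernel being a subgroup of the $p$-power torsion), is defined over a \emph{totally ramified} extension of $K$, whose residue field is still the fixed finite field $\F_{q'}$. Consequently every member of the $p$-power Hecke orbit of $\KS(X)$ reduces to an object defined over $\F_{q'}$, and there are only finitely many isomorphism classes of such (equivalently, only finitely many $\F_{q'}$-points of the finite-type boundary of the moduli space). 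This single observation replaces your entire second and third paragraphs; if you want to salvage your route, you would need to carry out the separation of ramified and unramified directions you describe, but the residue-field argument makes that unnecessary.
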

We note that in the type II reduction case, the Hecke correspondences extend to the toroidal boundary and hence it makes sense to talk about the reduction of the Hecke orbit. 
\begin{proof}
Since $X$ is ordinary, the Hecke action is compatible with the Kuga-Satake construction.  Hence it is enough to prove this for the associated Kuga-Satake abelian variety. In that case, by replacing $K$ with a finite extension, we may assume that the $p$-power torsion subgroups and hence any $p$-power isogenies  are defined over a  ramified extension of $K$. Thus, the reduction of the Hecke orbit of the Kuga-Satake abelian variety is defined over the residue field $\F_{q^{\prime}}$. Since there are finitely many isomorphism classes of abelian varieties over $\F_{q^{\prime}}$, the result follows.
\end{proof}
 \bibliography{bibli}

\providecommand{\bysame}{\leavevmode\hbox to3em{\hrulefill}\thinspace}
\providecommand{\MR}{\relax\ifhmode\unskip\space\fi MR }
% \MRhref is called by the amsart/book/proc definition of \MR.
\providecommand{\MRhref}[2]{%
  \href{http://www.ams.org/mathscinet-getitem?mr=#1}{#2}
}
\providecommand{\href}[2]{#2}
\begin{thebibliography}{KMP16}

\bibitem[BL91]{raynaudgen}
Siegfried Bosch and Werner L\"{u}tkebohmert, \emph{Degenerating abelian
  varieties}, Topology \textbf{30} (1991), no.~4, 653--698. \MR{1133878}

\bibitem[BZ22]{boundarygspin}
Jan~Hendrik Bruinier and Shaul Zemel, \emph{Special cycles on toroidal
  compactifications of orthogonal {S}himura varieties}, Math. Ann. \textbf{384}
  (2022), no.~1-2, 309--371. \MR{4476226}

\bibitem[Cha00]{chaimonodromy}
Ching-Li Chai, \emph{Local monodromy for deformations of one-dimensional formal
  groups}, J. Reine Angew. Math. \textbf{524} (2000), 227--238. \MR{1770609}

\bibitem[Cha08]{chaimethodsmonodromy}
\bysame, \emph{Methods for {$p$}-adic monodromy}, J. Inst. Math. Jussieu
  \textbf{7} (2008), no.~2, 247--268. \MR{2400722}

\bibitem[CS24]{caraianiraynaud}
Ana Caraiani and Peter Scholze, \emph{On the generic part of the cohomology of
  non-compact unitary {S}himura varieties}, Ann. of Math. (2) \textbf{199}
  (2024), no.~2, 483--590. \MR{4713019}

\bibitem[Eke91]{ekedahl}
Torsten Ekedahl, \emph{The action of monodromy on torsion points of
  {J}acobians}, Arithmetic algebraic geometry ({T}exel, 1989), Progr. Math.,
  vol.~89, Birkh\"auser Boston, Boston, MA, 1991, pp.~41--49. \MR{1085255}

\bibitem[Gro79]{gross}
Benedict~H. Gross, \emph{Ramification in {$p$}-adic {L}ie extensions},
  Journ\'{e}es de {G}\'{e}om\'{e}trie {A}lg\'{e}brique de {R}ennes ({R}ennes,
  1978), {V}ol. {III}, Ast\'{e}risque, vol.~65, Soc. Math. France, Paris, 1979,
  pp.~81--102. \MR{563473}

\bibitem[Igu68]{igusa}
Jun-ichi Igusa, \emph{On the algebraic theory of elliptic modular functions},
  J. Math. Soc. Japan \textbf{20} (1968), 96--106. \MR{240103}

\bibitem[Kat73]{katz}
Nicholas~M. Katz, \emph{{$p$}-adic properties of modular schemes and modular
  forms}, Modular functions of one variable, {III} ({P}roc. {I}nternat.
  {S}ummer {S}chool, {U}niv. {A}ntwerp, {A}ntwerp, 1972), Lecture Notes in
  Math., vol. Vol. 350, Springer, Berlin-New York, 1973, pp.~69--190.
  \MR{447119}

\bibitem[Kis10]{intmod}
Mark Kisin, \emph{Integral models for {S}himura varieties of abelian type}, J.
  Amer. Math. Soc. \textbf{23} (2010), no.~4, 967--1012. \MR{2669706}

\bibitem[KLSS]{KLSS}
Mark Kisin, Yeuk Hay~Joshua Lam, Ananth~N. Shankar, and Padmavathi Srinivasan,
  \emph{Finiteness of reductions of hecke orbits}, Journal of the European
  Mathematical Society.

\bibitem[KMP16]{2adicintmod}
Wansu Kim and Keerthi Madapusi~Pera, \emph{2-adic integral canonical models},
  Forum Math. Sigma \textbf{4} (2016), Paper No. e28, 34. \MR{3569319}

\bibitem[MP16]{spinsh}
Keerthi Madapusi~Pera, \emph{Integral canonical models for spin {S}himura
  varieties}, Compos. Math. \textbf{152} (2016), no.~4, 769--824. \MR{3484114}

\bibitem[MP19]{keerthitor}
\bysame, \emph{Toroidal compactifications of integral models of {S}himura
  varieties of {H}odge type}, Ann. Sci. \'{E}c. Norm. Sup\'{e}r. (4)
  \textbf{52} (2019), no.~2, 393--514. \MR{3948111}

\bibitem[Ogu84]{ogus}
Arthur Ogus, \emph{{$F$}-isocrystals and de {R}ham cohomology. {II}.
  {C}onvergent isocrystals}, Duke Math. J. \textbf{51} (1984), no.~4, 765--850.
  \MR{771383}

\bibitem[Pin90]{pink}
Richard Pink, \emph{Arithmetical compactification of mixed {S}himura
  varieties}, Bonner Mathematische Schriften [Bonn Mathematical Publications],
  vol. 209, Universit\"{a}t Bonn, Mathematisches Institut, Bonn, 1990,
  Dissertation, Rheinische Friedrich-Wilhelms-Universit\"{a}t Bonn, Bonn, 1989.
  \MR{1128753}

\bibitem[Ray71]{raynaud}
Michel Raynaud, \emph{Vari\'{e}t\'{e}s ab\'{e}liennes et g\'{e}om\'{e}trie
  rigide}, Actes du {C}ongr\`es {I}nternational des {M}ath\'{e}maticiens
  ({N}ice, 1970), {T}ome 1, Gauthier-Villars \'{E}diteur, Paris, 1971,
  pp.~473--477. \MR{427326}

\bibitem[SS20]{kugaSatakedeg}
Stefan Schreieder and Andrey Soldatenkov, \emph{The {K}uga-{S}atake
  construction under degeneration}, J. Inst. Math. Jussieu \textbf{19} (2020),
  no.~6, 2165--2182. \MR{4167005}

\bibitem[Tay22]{salimk3}
Salim Tayou, \emph{Picard rank jumps for k3 surfaces with bad reduction}, To
  appear in Algebra and Number theory, arXiv:2203.09559 (2022).

\end{thebibliography}
\bibliographystyle{amsalpha}

 \end{document}